\documentclass{amsart}

\usepackage{graphicx}
\usepackage{amsthm}

\usepackage{fullpage}

\usepackage{tikz-cd}
\usepackage{amssymb}
\usepackage{amsmath}
\usepackage{biblatex} 
\usepackage{stmaryrd}
\usepackage{hyperref}
\usepackage{quiver}
\usepackage{bm}
\theoremstyle{plain}
\newtheorem{theorem}{Theorem}[section]
\newtheorem{proposition}[theorem]{Proposition}
\newtheorem{lemma}[theorem]{Lemma}
\newtheorem{corollary}[theorem]{Corollary}
\theoremstyle{definition}
\newtheorem{definition}{Definition}
\newtheorem{example}{Example}
\newtheorem{remark}[theorem]{Remark}

\title{Rank jumps for Jacobians of hyperelliptic curves on K3 surfaces}
\author{Ander Arriola Corpion and Cec\'ilia Salgado}
\date{}

\begin{document}

\maketitle

\begin{abstract}
We study Mordell--Weil rank jumps on families of jacobians of a pencil of genus-2 curves on a K3 surface defined over a number field $k$. We exhibit a finite extension $l/k$ over which the subset of fibers for which the rank jumps is infinite. Moreover, we describe further geometric conditions on the K3 surface under which the rank jumps on a non-thin set of fibers.
\end{abstract}
\section{Introduction}
\vspace{5pt}
Let $k$ be a number field, $S$ a smooth, geometrically integral curve over $k$, and $\eta: \mathcal{A} \rightarrow S$ a non-constant family of abelian varieties. Let $A$ be its generic fiber. Let $l/k$ be a finite field extension. The group of $l$-sections of $\eta$ is naturally isomorphic to the Mordell--Weil group $A(l(S))$. By the Lang--Néron theorem \cite[Theorem 6.1]{FundamentalsLang}, this group is finitely generated; we denote its rank by $r(\eta, l)$. For a point $s \in S(l)$, let $\mathcal{A}_s := \eta^{-1}(s)$ be the corresponding fiber and let $r_s$ denote the Mordell--Weil rank of $\mathcal{A}_s$ over $l$. A specialization theorem by Silverman, which builds on work of Néron, \cite{NeronSpecialization, SilvermanSpecialization} implies that the inequality
\begin{equation}\label{ineq:Silverman}
r_s \geq r(\eta, l)
\end{equation}
holds for all but finitely many $s \in S(l)$. The aim of this note is to study Mordell--Weil \textit{rank jumps} within these families, i.e., the set

\[
\mathcal{R}(\eta,l) \ = \ \{ \ s \in S(l) \mid r_s > r(\eta, l) \ \}
\]
of fibers for which the inequality in (\ref{ineq:Silverman}) is strict. Evidently, this is only interesting when $S(l)$ is infinite.

Our focus lies on families of jacobians associated to a pencil of genus-$2$ curves on a K3 surface $X$ defined over $k$. So, in our case, $S\cong\mathbb{P}^1$. K3 surfaces with a genus 2 curve admit a realization as a double cover $X \rightarrow \mathbb{P}^2_k$ branched along a plane sextic curve $B \subset \mathbb{P}^2_k$ \cite[Theorem~2.3]{DolgachevSpK3}. In this setting, our main results are as follows:

\begin{theorem}\label{thm1 intro}
Assume that $B$ contains a geometrically irreducible non-linear component defined over $k$.
Then, there is a field extension $l/k$ of degree at most 12 such that  
\[|\mathcal{R}(\eta,l)| = \infty.\]
\end{theorem}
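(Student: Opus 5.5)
The plan is to realize the rank jump through a base change of the genus-$2$ fibration along a rational curve that acquires a new non-torsion section. Concretely, fix the model $\pi\colon X\to\mathbb{P}^2_k$ branched along $B$. The genus-$2$ pencil is the pencil of preimages $\pi^{-1}(L)$ of lines $L$ through a fixed point $p\in\mathbb{P}^2$, so $S\cong\mathbb{P}^1$ is the pencil of lines through $p$. Let $C\subset B$ be the geometrically irreducible non-linear component over $k$, of degree $2\le d\le 6$. I will choose the point $p$, and with it the field $l$, using $C$.

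First I choose $l$ and $p$. Intersecting $C$ with a general $k$-line gives a point $p_0\in C$ of degree at most $6$ over $k$. Let $l_0=k(p_0)$. Over $l_0$ I will place the base point of the pencil on $C$, say $p=p_0$. Then every fiber $\pi^{-1}(L)$ has the Weierstrass point $\pi^{-1}(p)$ as an $l_0$-rational point. This gives each genus-$2$ fiber a quintic model $y^2=f_5(x)$ and an Abel--Jacobi embedding into its jacobian. A further quadratic extension $l/l_0$, so $[l:k]\le 12$, will be used to split a conic-type obstruction: I need a rational curve $\Gamma$ with an $l$-point. This accounts for the bound $12=2\cdot 6$.

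The multisection comes next. Let $T$ be the tangent line to $C$ at $p$ (or, if $p$ is singular on $C$, a branch tangent). I consider the curve $\Gamma\subset X$ defined, fiber by fiber, as follows. On $\pi^{-1}(L)$ take the points lying over the residual intersection of $L$ with a fixed auxiliary conic $Q$ through $p$, with $Q$ chosen over $l_0$ and tangent to $C$ at $p$ to high order. The preimage $\pi^{-1}(Q)$ splits into two components when $Q$ meets $B$ with even multiplicities everywhere. I will try to impose this using the freedom in choosing $Q$ together with the contact at $p$. Each component $\Gamma$ is then a rational curve mapping to $S$ with degree $1$ or $2$, defined over $l$ after at most a quadratic extension. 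The divisor $P-\pi^{-1}(p)$ with $P\in\Gamma$ defines a point $\sigma\in J(l(\Gamma))$. The key step is that $\sigma$ is non-torsion and independent of the base-changed generic Mordell--Weil group $J(l(S))$. For this I would compute heights: $\Gamma$ is not a Weierstrass multisection, since $Q\not\subset B$, so $\sigma$ is not $2$-torsion. I would then use the Shioda--Tate/height pairing on the associated Néron model to show that $\langle\sigma,\sigma\rangle>0$ after projecting away from $J(l(S))\otimes\mathbb{Q}$. The independence follows because $\sigma$ is anti-invariant under the Galois involution of $l(\Gamma)/l(S)$ when $\deg(\Gamma\to S)=2$, while $J(l(S))$ is invariant. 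This gives $r(\eta_\Gamma,l)\ge r(\eta,l)+1$.

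Finally, $\Gamma\cong\mathbb{P}^1_l$ has infinitely many $l$-points. Applying Silverman's specialization inequality (\ref{ineq:Silverman}) to the base-changed family over $\Gamma$, all but finitely many $\gamma\in\Gamma(l)$ have $r_{\pi(\gamma)}\ge r(\eta_\Gamma,l)>r(\eta,l)$. The images of $\Gamma(l)$ in $S(l)$ form an infinite set, since the map has finite fibers, so $|\mathcal{R}(\eta,l)|=\infty$.

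I expect the main obstacle to be the construction of the splitting conic $Q$, equivalently a rational multisection, with controlled field of definition, together with the proof that the resulting section is non-torsion. If the conic route fails for some configurations of $B$, I would fall back on lines through $p$ that are tangent to $C$ at a second point. These make the fiber's branch quintic acquire a square factor after a suitable base change, and I would read off the new point from that factorization. In this fallback too, the degree bound would come from the degree-$6$ choice of $p$ and one quadratic twist.
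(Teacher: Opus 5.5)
There is a genuine gap. The rational multisection $\Gamma$ on which everything rests does not exist in general, and where it does exist it produces no jumps.

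For $f^{-1}(Q)$ to split, the conic $Q$ must meet $B$ with even multiplicity at every intersection point. That is six conditions on the five-dimensional family of conics, even before you also impose $p\in Q$ and high contact with $C$ at $p$. So for general $B$ no such $Q$ exists. Concretely, suppose $X$ has geometric Picard number one, as in the last example of Section~\ref{sec:examples}, which satisfies the hypothesis. A curve $\Gamma$ mapping birationally onto a conic would have $\Gamma\cdot h=2$, hence $\Gamma\sim h$. Then $\Gamma\in|h|=f^*|\mathcal{O}_{\mathbb{P}^2}(1)|$ would be the preimage of a line, which is absurd.

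Even when such a $Q$ exists, your requirement $p\in Q$ kills the argument. Each line through $p$ meets $Q$ in exactly one further point, so each component of $f^{-1}(Q)$ has degree $1$ over $S$. It is therefore a section: your $\sigma$ already lies in $J(l(S))$ and raises $r(\eta,l)$ instead of creating jumps.

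In the degree-two case your anti-invariance claim is false. If $P_1,P_2$ are the points of $\Gamma$ on the generic fiber, then $\sigma+\sigma^{\tau}=[P_1+P_2-2W]$. This vanishes only when $P_2$ is the hyperelliptic conjugate of $P_1$, which is not the case here. What actually has to be shown is that $[P_1-P_2]$ is non-torsion, and that is exactly the step you leave to an unspecified height computation.

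The fallback does not help: lines through $p$ are members of the pencil, i.e.\ fibers, not multisections. Finally, the pencil in the statement is given, with base point $P\in\mathbb{P}^2(k)$. Moving the base point onto $C$ replaces $\eta$ by a different family, defined only over $l_0$.

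The missing idea is to use tangent lines to $B_0$ that do \emph{not} pass through $P$, and to settle for geometric genus $1$.
Intersecting $B_0$ with $k$-lines gives infinitely many $x\in B_0$ of degree at most $d\le 6$. For all but finitely many of them, Lemma~\ref{lemmatglines} gives $|T_x\cap B|=5$ and a simple intersection point $y\neq x$ with $\phi_{L_P}(y)$ in the smooth locus. By Lemmas~\ref{lemma: pull back line} and~\ref{pblinesmultsecram}, $C_{T_x}$ is then a bisection of $\pi_P$ whose normalization is a double cover of $T_x$ branched at four points. Its branch locus $\phi_{L_P}(I)$ contains $\phi_{L_P}(y)$, so it is saliently ramified.

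Salient ramification is what replaces your height argument. In Lemma~\ref{rkjplemma}, a relation $[m]s^1_M\in J(K)$ with $m\neq 0$ would force $j^0_1(\hat M^0\times_{S^0}\hat M^0)$ into the finite \'etale group scheme $\ker[mn]$. That is impossible because the fiber product is ramified over $\hat M^0$.

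A further quadratic extension over which the genus-one normalization has infinitely many points \cite[Proposition~4.2]{JuliaRatPtK3deg2} gives $[l:k]\le 2\cdot 6=12$. Your final specialization step, which is Proposition~\ref{infptrkjump}, then concludes.
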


The existence of a genus-$1$ fibration on $X$ often allows us to obtain rank jumps on a \emph{non-thin} set of fibers. A sufficient condition for this is the presence of a singularity on the branch sextic. 
More precisely, we obtain:

\begin{theorem}\label{thm2 intro}
Assume that $B$ admits a simple singularity defined over $k$. Then, there is a field extension $l/k$ of degree at most 18 such that $\mathcal{R}(\eta,l)$ is not thin.
\end{theorem}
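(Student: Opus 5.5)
The plan is to turn the simple singularity $p\in B$ into a genus-$1$ fibration on $X$. Every fiber of this fibration will carry a point that is not in the image of the generic Mordell--Weil group. A thinness argument of Hilbert-irreducibility type then produces a non-thin set of jumping fibers.

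\emph{The genus-$1$ fibration.} The pencil of genus-$2$ curves comes from the lines through a fixed point: its members are the preimages $C_t=\pi^{-1}(L_t)$ of the lines $L_t$ in a pencil of lines in $\mathbb{P}^2$. Since $p$ is defined over $k$, the lines through $p$ form a pencil defined over $k$. Such a line $M$ meets $B$ with multiplicity at least $2$ at $p$, so it meets $B$ in at most $4$ further points. Hence the strict transform of $\pi^{-1}(M)$ on the minimal resolution $X$ has genus $\le 1$, and it is genus-$1$ for general $M$. This gives a genus-$1$ fibration $\phi:X\to\mathbb{P}^1$ over $k$. Its fibers are double covers of lines through $p$ branched at $4$ points.

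\emph{A point on each fiber.} Take $L_t$ and a general line $M$ through $p$, meeting at the point $q=L_t\cap M$. Then $\pi^{-1}(q)$ gives two points of $C_t$, conjugate over $k(\sqrt{f(q)})$, where $f$ is the sextic. Parametrizing $M$ over a point $s\in\mathbb{P}^1$, I will look for a field $l$ of bounded degree over which a suitable section exists. The natural source is a rational curve: a component of the exceptional locus over $p$, or a fiber component of $\phi$, defined over a field of degree at most $18$. Here $18$ should come from the root-system data of the simple singularity ($A_n,D_n,E_{6,7,8}$), bounded via Galois action on the exceptional curves. Restricting $\phi$ to this curve, or using a section of $\phi$ of this kind, the points $\pi^{-1}(L_t\cap M)$ vary with $(t,s)$ and form a two-parameter family of divisors of degree $2$ on $C_t$. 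Subtracting a fixed canonical divisor gives classes $P_{t,s}\in J_t(l)$. The relevant parameter space $W\to\mathbb{P}^1_t$ is a rational surface over $l$, for instance the double cover of $\mathbb{P}^1\times\mathbb{P}^1$ branched over $f(q)=0$, which is rational after a base change. Its $l$-points then project to every fiber.

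\emph{Non-thinness.} The set of $t\in\mathbb{P}^1(l)$ with $r_t>r(\eta,l)$ contains those $t$ for which some $P_{t,s}$ is independent of the specialized generic generators. If the section $P$ of the base-changed Jacobian family over $l(W)$ is not in $A(l(t))\otimes\mathbb{Q}$, I apply Silverman's specialization theorem \cite{SilvermanSpecialization} to the family over $W$. This shows that independence holds off a thin subset of $W(l)$. Since $W$ is $l$-rational, $W(l)$ is not thin, and its image in $\mathbb{P}^1(l)$ is then non-thin: the image of a non-thin set under a dominant map from a rational variety, with geometrically irreducible generic fiber, is non-thin.

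\emph{Main obstacle.} The hardest step is showing that $P$ is genuinely new, that is, not defined over $l(t)$ up to torsion and the generic group. I would first try a monodromy argument. As $s$ varies, the points move along $C_t$, whereas any element of $A(l(t))$ is constant in $s$. Then $P_{t,s}-P_{t,s'}\neq 0$ in $J_t$ for $s\neq s'$, because $C_t$ is not hyperelliptic in a way that would make two such degree-$2$ divisors linearly equivalent. Only the hyperelliptic $g^1_2$ gives linear equivalences among degree-$2$ divisors, and I would check that our divisors are not fibers of it for general $(t,s)$.
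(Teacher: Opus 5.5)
Your construction of the new point does not work. For a line $L_t$ through $P$, the map $C_t\to L_t$ induced by the double cover $f\colon X\to\mathbb{P}^2$ (your $\pi$) is the hyperelliptic double cover of $C_t$. So for every $q\in L_t$, the degree-$2$ divisor of $C_t$ lying over $q$ is a member of the $g^1_2$, i.e.\ it is linearly equivalent to $K_{C_t}$. Hence $P_{t,s}=0$ in $J_t$ for all $(t,s)$. The check you postpone to the end (that your divisors are not fibers of the $g^1_2$) fails identically, because they are fibers of it by construction.

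The class that actually carries information is, up to sign, the difference $x-\iota(x)$ of the two points over $q$. This is exactly the class $s^1_M=P_2-P_1$ attached to the bisection $E_s=f^{-1}(M_s)$ of $\pi_P$ in Lemma~\ref{rkjplemma}. However, it is $l$-rational only when $x$ is. Imposing that, your parameter space becomes the double cover of $\mathbb{P}^1_t\times\mathbb{P}^1_s$ with function field $l(t,s)(\sqrt{F(q)})$, where $F$ is the sextic form (your $f$). Since $(t,s)\mapsto L_t\cap M_s$ is birational onto $\mathbb{P}^2$, this $W$ is birational to the K3 surface $X$ itself. It is not rational over any extension, so the step ``$W(l)$ is not thin because $W$ is $l$-rational'' is false. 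Your push-forward argument is fine in principle, but it would need the Hilbert property for $X$ over $l$. That is precisely what is unknown for these surfaces, and what the paper is designed to avoid. Your other suggested sources of sections do not help either: the exceptional curves over $p$ are contracted by $f$ to $p$, so they are vertical for $\pi_P$.

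The missing idea is to use a single genus-$1$ fiber with infinitely many $l$-points instead of a two-parameter family. The paper first shows (Proposition~\ref{prop: rkjumpelliptic}) that over some $l$ with $[l:k]\le 18$, infinitely many fibers $E_t$ of the genus-$1$ fibration $\pi_Q$ have infinitely many $l$-points. To do this it builds a genus-$1$ saliently ramified multisection of $\pi_Q$, in one of two ways:
\begin{itemize}
\item from tangent lines to a non-linear component of $B$ (Theorem~\ref{theorem: tgmultisection});
\item or, when $B$ is six lines, from the pencil through a second singular point (Lemma~\ref{lemma: twosingularities}), defined over a compositum of two cubic fields.
\end{itemize}
It then passes to a quadratic extension over which that curve has infinitely many points, and applies Corollary~\ref{cor: g1manyposrankfibers}. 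This is where $18=9\cdot 2$ comes from, not from the root system of the singularity.

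Each general $E_t$ is a saliently ramified bisection of $\pi_P$ (Lemma~\ref{pblinesmultsecram}). So Proposition~\ref{infptrkjump} gives $\pi_P(x)\in\mathcal{R}(\pi_P,l)$ for all but finitely many $x\in E_t(l)$.

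Non-thinness is then proved directly. Given covers $\varphi_i\colon Y_i\to\mathbb{P}^1_l$ of degree at least $2$, pick $E=E_{t_0}$ with $|E(l)|=\infty$ whose branch locus over $\mathbb{P}^1_l$ shares at most one point with that of each $\varphi_i$. Then the normalized fiber products $E\times_{\mathbb{P}^1_l}Y_i$ have genus at least $2$, hence finitely many $l$-points by Faltings. Therefore infinitely many points of $E(l)$ give rank jumps outside $\bigcup_i\varphi_i(Y_i(l))$.
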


\begin{remark} The field extension $l/k$ appearing in Theorems~\ref{thm1 intro} and~\ref{thm2 intro} depends on the geometry of the branch curve $B$ and on the field of definition of singular points in it. In many concrete cases, it can be chosen to be of smaller degree. For example, let $\mathcal{L}_d$ be the family of $U \oplus \langle -2d\rangle$-polarized K3 surfaces. By \cite[Proposition~3.16]{GarbagnatiSalgado} every $X \in \mathcal{L}_{2m+5}$ admits a realization as a double cover $X \rightarrow \mathbb{P}^2_k$ branched over a sextic with a simple node, and there exists a rational curve in $\mathbb{P}^2_k$ of degree $m+1$ passing through the node with multiplicity $m$ that splits in the double cover. So, in that particular case, the rank jump occurs over the base field $k$.
\end{remark}
\begin{corollary}\label{cor3:intro}
Let $X$ be a generic K3 surface in $\mathcal{L}_d$ with $d>3$, $d \equiv 3 $ (mod 4), and let $\eta: \mathcal{A} \rightarrow \mathbb{P}^1_k$ be the family of jacobians of a pencil of hyperelliptic curves of genus 2 in $X$. Then, $\mathcal{R}(\eta,k) \subset \mathbb{P}^1_k$ is not thin.
\end{corollary}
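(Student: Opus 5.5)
The corollary should follow by feeding the specific geometry of $\mathcal{L}_d$ into the argument behind Theorem~\ref{thm2 intro}, and checking that no field extension is needed. Write $d=2m+5$. The condition $d\equiv 3 \pmod 4$ with $d>3$ means $m$ is even and $m\geq 0$; the parity of $m$ presumably matters in the construction below. By \cite[Proposition~3.16]{GarbagnatiSalgado}, a generic $X\in\mathcal{L}_d$ is a double cover $\pi\colon X\to\mathbb{P}^2_k$ branched along a sextic $B$ with a node $p$. All of this is defined over $k$:
\begin{itemize}
\item the node $p$ is $k$-rational, being the unique singular point of $B$;
\item there is a rational curve $C\subset\mathbb{P}^2$ of degree $m+1$ with multiplicity $m$ at $p$, and $C$ splits in the double cover.
\end{itemize}
In the proof of Theorem~\ref{thm2 intro}, the extension $l/k$ of degree at most $18$ is used only to make three things rational: the singular point, a splitting curve, and the components of its preimage. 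So the plan is to show that here all of these are already rational over $k$, and then run the argument of Theorem~\ref{thm2 intro} verbatim with $l=k$.

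\textbf{Step 1: the genus-$1$ fibration.} Lines through the node $p$ meet $B$ in $4$ further points. Their preimages on $X$ are therefore genus-$1$ curves, giving an elliptic fibration $X\to\mathbb{P}^1_k$ defined over $k$.

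\textbf{Step 2: rationality of the splitting curve and its components.} Since $X$ is generic in the family, $NS(X)=U\oplus\langle -2d\rangle$ has rank $2$, and the Galois action on $NS(X_{\bar k})$ is trivial, at least after the standard reduction. I would check that:
\begin{itemize}
\item the curve $C$ is unique in its linear system with the given multiplicity at $p$, hence Galois-stable and defined over $k$;
\item $\pi^{-1}(C)=C_1\cup C_2$, and the classes of $C_1,C_2$ are distinct in $NS$ (the involution swaps them), so each $C_i$ is defined over $k$ rather than over a quadratic extension.
\end{itemize}
The second point is where the congruence $d\equiv 3\pmod 4$ should enter: it should prevent the involution from exchanging two conjugate components, which would otherwise force a quadratic twist.

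\textbf{Step 3: the rank jump.} Each $C_i$ should meet the genus-$2$ fibers in a way that produces extra points on their jacobians. Restricting to the members of the genus-$2$ pencil parametrised by the elliptic curve $E$ (or by a section of the genus-$1$ fibration), we get a family of base points. Since $E(k)$ is infinite when it has positive rank, or more generally via the genus-$1$ fibration, this yields a non-thin set of parameters in $\mathbb{P}^1(k)$ where the jacobian acquires a new independent point. Independence from the generic Mordell--Weil group follows as in Theorem~\ref{thm2 intro}, via the specialization argument.

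\textbf{Main obstacle.} The hardest step is Step 2: showing that nothing in the construction needs a quadratic extension, that is, that the two components $C_i$, and the relevant section of the genus-$1$ fibration with positive rank over $k$, are $k$-rational. I would first compute the intersection numbers of $C_i$ with the fiber class and the section in $U\oplus\langle-2d\rangle$, and use the rank-$2$ Néron–Severi lattice to force Galois invariance of each $C_i$. Genericity excludes any extra classes that Galois could permute.
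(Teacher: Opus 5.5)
There is a genuine gap, and it is in the step you identify as the main obstacle. You misread what the extension in Theorem~\ref{theorem: nonthinrkjumpgenus2} is for. There, $Q$ is already assumed $k$-rational and no splitting curve occurs. The field $l$ comes only from Proposition~\ref{prop: rkjumpelliptic}, i.e.\ from needing the genus-$1$ fibration $\pi_Q$ to have infinitely many fibers with infinitely many $l$-points. The paper proves the corollary by importing exactly this statement over $k$ from \cite[Proposition~3.16 and Theorem~5.3]{GarbagnatiSalgado}; that is where $d>3$ and $d\equiv 3\pmod 4$ are used. It then runs the proof of Theorem~\ref{theorem: nonthinrkjumpgenus2} with $l=k$.

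Your substitute, forcing each $C_i$ to be $k$-rational via the N\'eron--Severi lattice and genericity, cannot work. The covering involution swaps $C_1,C_2$ while fixing $h$ and the exceptional curve $N$ over the node. So if $C_1,C_2$ are $k$-rational on $w^2=F$, then on the twist $w^2=cF$ with $c\notin k^{*2}$ they are conjugate over $k(\sqrt c)$, and Galois acts on $\mathrm{NS}$ through this involution. That twist is geometrically the same surface, hence equally generic, with the same branch curve and node. Since $d$ is a geometric invariant, no congruence on $d$ can rule this out; Example~\ref{ex:2} shows the phenomenon for $d=5$. Two smaller errors: $U\oplus\langle -2d\rangle$ has rank $3$, not $2$, and $d=2m+5\equiv 3\pmod 4$ forces $m$ odd.

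Your instinct is right only conditionally. From $C_1+C_2\sim(m+1)F+N$, with $F=h-N$ the fiber class of $\pi_Q$, one gets $C_1\cdot C_2=2m+3$. So if the $C_i$ are $k$-rational, $C_2$ is a $k$-section of height $4+2\,C_1\cdot C_2=2d>0$ relative to $C_1$, and specialization gives what is needed. But that is arithmetic information about a particular model, not a consequence of genericity.

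Step~3 also lacks the actual non-thinness mechanism. The $C_i$ are rigid rational curves. As multisections of $\pi_P$ (generically of degree $m+1\ge 2$), they can only contribute, via Proposition~\ref{infptrkjump}, the image of $\mathbb{P}^1(k)$ under a map of degree at least $2$, which is thin. Non-thinness comes from the moving family of fibers $E_t$ of $\pi_Q$:
\begin{enumerate}
\item For all but finitely many $t$, $E_t$ is a saliently ramified bisection of $\pi_P$ (Lemma~\ref{pblinesmultsecram}).
\item Given covers $\varphi_i\colon Y_i\to\mathbb{P}^1$, pick such an $E_t$ with $E_t(k)$ infinite and sharing at most one branch point with each $\varphi_i$. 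Then the normalized fiber products $E_t\times_{\mathbb{P}^1}Y_i$ have genus at least $2$.
\item Faltings together with Proposition~\ref{infptrkjump} then produces infinitely many points of $\mathcal{R}(\pi_P,k)$ outside $\bigcup_i\varphi_i(Y_i(k))$.
\end{enumerate}
Your sentence ``this yields a non-thin set of parameters'' skips all of this. The input this argument needs over $k$ is that $E_t(k)$ is infinite for infinitely many $t$. That must be justified, either by the citation or by a $k$-section of infinite order as above.
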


Finally, applying our results to a family of K3 surfaces of degree $2$  studied in~\cite{JuliaRatPtK3deg2} yields the following:

\begin{corollary} \label{cor:4 intro}
In the coarse moduli space $\mathcal{M}_2$ of K3 surfaces of degree 2 there is a dense family of polarized K3 surfaces $(X,\mathcal{L})$ defined over $\mathbb{Q}$ satisfying the following property: there is a dense set of curves in $ | \mathcal{L}|$ defined over $\mathbb{Q}$ with Jacobian of positive Mordell-Weil rank over $\mathbb{Q}$.\\
\end{corollary}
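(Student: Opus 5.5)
The plan is to reduce Corollary~\ref{cor:4 intro} to Theorem~\ref{thm2 intro}, with two changes: the field extension $l$ must be $\mathbb{Q}$ itself, and we need density in $|\mathcal{L}|\cong\mathbb{P}^2$, not just in one pencil. For the family, I would take the degree-2 K3 surfaces of \cite{JuliaRatPtK3deg2}. These are double covers $X\to\mathbb{P}^2_{\mathbb{Q}}$, given by $w^2=f_6(x,y,z)$, whose branch sextic is chosen so that some low-degree curve is totally tangent to it. Concretely, there is a line $L$ or conic $C$ meeting $B$ with even multiplicity at every intersection point, all defined over $\mathbb{Q}$. Its preimage then splits into two rational curves over $\mathbb{Q}$. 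The first step is to check that this family is dense in $\mathcal{M}_2$. I expect this to follow from a parameter count: the condition that $f_6$ restricted to the curve is a square imposes fewer conditions than the dimension of the space of sextics, and the resulting locus is Zariski-dense in the $19$-dimensional moduli space. I would also impose, over $\mathbb{Q}$, whatever extra condition is needed so that a split curve passes through the base point of the pencil we use.

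Next, I fix a rational point $p\in\mathbb{P}^2(\mathbb{Q})$ and consider the pencil of lines through $p$. The corresponding genus-2 curves are the double covers of these lines, with jacobian family $\eta_p$. In the proof of Theorems~\ref{thm1 intro} and~\ref{thm2 intro}, the extension $l$ arises only as a field of definition: either of a splitting curve through the relevant point, or of a singular point of $B$. In our family the split components $D^{\pm}$ are already defined over $\mathbb{Q}$. For each line $\ell\ni p$, the points of $D^+\cap\pi^{-1}(\ell)$ give divisors on the fiber $X_\ell$. For a generic member these divisors differ from the hyperelliptic class, so they produce a point of $\mathrm{Jac}(X_\ell)$. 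For instance, take $[P^+ - P^-]$, where $P^{\pm}$ are the two lifts of the same point of $\ell$ with $P^+$ on $D^+$ and $P^-=\iota P^+$ on $D^-$.

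The main obstacle is that a section coming from a split curve gives positive rank on every fiber. It therefore raises $r(\eta_p,\mathbb{Q})$ itself, and what we actually need is $r_s>0$. So I would argue the other way round. Use the generic rank $r(\eta_p,\mathbb{Q})\geq 1$, which comes from the $\mathbb{Q}$-rational splitting curve, together with Silverman's specialization (\ref{ineq:Silverman}): for all but finitely many $s$, $r_s\geq r(\eta_p,\mathbb{Q})\geq 1$. Here the real work is showing that the section is non-torsion. I would do this by computing its height pairing via intersection numbers on $X$, in the spirit of the Shioda–Tate computations behind the earlier theorems. Alternatively, where the generic rank is $0$, I would invoke Theorem~\ref{thm2 intro} with $l=\mathbb{Q}$, using a node of $B$ defined over $\mathbb{Q}$ that the family can be arranged to have.

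Finally, to get density in $|\mathcal{L}|$, I let $p$ vary over $\mathbb{P}^2(\mathbb{Q})$. Each pencil contributes all but finitely many of its rational members, or a non-thin set of them. A union over a dense set of rational pencils of sets that are cofinite in each pencil is Zariski-dense in the dual plane $|\mathcal{L}|$. This yields the dense set of curves in $|\mathcal{L}|$ defined over $\mathbb{Q}$ whose jacobian has positive Mordell--Weil rank over $\mathbb{Q}$.
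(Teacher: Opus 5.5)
There is a genuine gap in your choice of family; the final density step is fine and is essentially the paper's Corollary~\ref{cor: rkjmpcompletesystem}(1).

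\textbf{The family cannot be dense.} If the sextic has a tritangent line, or a conic meeting it with even multiplicity everywhere, the preimage splits into two $(-2)$-curves $m,m'$ with $m\cdot h=1$ (resp.\ $2$). Then $\mathrm{NS}(X)$ contains a rank-two lattice such as $L=\big(\begin{smallmatrix}2&1\\1&-2\end{smallmatrix}\big)$. Such surfaces lie on a Noether--Lefschetz divisor of the $19$-dimensional space $\mathcal{M}_2$, so they are not Zariski dense.

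Your parameter count is off. Three tangency conditions against the two-dimensional family of lines leaves codimension one, not zero. The same holds for six tangency conditions against the five-dimensional family of conics.

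This is also not what \cite{JuliaRatPtK3deg2} constructs. Its general members have geometric Picard rank $1$ (see the last example of Section~\ref{sec:examples}), so they carry no split curve and your section $[P^+-P^-]$ does not exist. The fallback via a $\mathbb{Q}$-rational node has the same defect, since nodal branch sextics again form a divisor in $\mathcal{M}_2$. Moreover, Theorem~\ref{thm2 intro} only gives some $l$ of degree $\le 18$, not $l=\mathbb{Q}$.

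\textbf{The missing idea.} Use a multisection that imposes no Picard-jumping condition: the pullback of a \emph{simple} tangent line. In the family $T$ of \cite{JuliaRatPtK3deg2}, which dominates $\mathcal{M}_2$:
\begin{itemize}
\item the point $x=[1:0:0]$ lies on $B$;
\item the tangent line $T_x$ meets $B$ in exactly five points;
\item $M=f^{-1}(T_x)$ is a nodal curve of geometric genus $1$ with infinitely many $\mathbb{Q}$-points.
\end{itemize}
By Lemma~\ref{pblinesmultsecram}, for $P$ in a dense open subset of $\mathbb{P}^2$, the strict transform of $M$ is a saliently ramified bisection of $\pi_P$. It is branched over the images of the four transversal intersection points, which lie under smooth fibers for generic $P$. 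Proposition~\ref{infptrkjump} then gives $|\mathcal{R}(\pi_P,\mathbb{Q})|=\infty$ with no field extension. Corollary~\ref{cor: rkjmpcompletesystem}(1) makes $\mathcal{R}(\pi,\mathbb{Q})$ dense in $|\mathcal{L}|$.

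You also do not need positive generic rank. Membership in $\mathcal{R}(\pi,\mathbb{Q})$ already means rank $\ge r(\pi,\mathbb{Q})+1\ge 1$. This is why the paper works with rank jumps instead of trying to produce a rational section on a Picard-rank-one surface.
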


\subsection{Methodology and relation to the literature}
\mbox{}
\vspace{5pt}

Several authors have addressed the variation of the Mordell--Weil rank in families of abelian varieties. Different methods have been employed to treat the problem in the context of elliptic surfaces, namely the study of root numbers pioneered by Rohrlich (\cite{Rohrlich1993, VarillyAlvarado2011, Desjardins19}); the use of height theory as a tool in estimating the size of the set of fibers that witness a rank jump (\cite{BillardRepartition}); and the description of multisections that induce linearly independent sections after base change (\cite{SalgadoANT, LS, SalgadoPasten, CostaSalgado}). Our work  builds on the third method.\\

When $\eta \colon \mathcal{A} \rightarrow S$ is a family of abelian varieties defined over a number field $k$, with $S$ a smooth curve, Hindry and Salgado \cite{HindrySalgado} proved that the $k$-unirationality of $\mathcal{A}$ implies the existence of infinitely many fibers with a rank jump.  Colliot-Th\'elène \cite{ColliotTheleneRankJumps} later generalized this result, allowing $S$ to be higher dimensional and showing, in particular, that whenever there exists a variety $W$ with the Hilbert property and a dominant $k$-morphism $W \to \mathcal{A}$ such that the composite $W \to \mathcal{A} \to S$ has geometrically integral generic fiber, the rank jump set $\mathcal{R}(\eta,k)$ is not thin in $S$. \\

While our results deal with the special setting where $\mathcal{A} \rightarrow S$ is the family of jacobians of a pencil of curves on a K3 surface, we do not need the hypothesis that the underlying variety $\mathcal{A}$ is $k$-unirational nor the presence of a variety $W$ as described above. Indeed, we treat K3 surfaces that may possess at most one genus-$1$ fibration, or none at all. For such surfaces, the potential Hilbert property is generally unknown, and a suitable $W$ is not known to exist, in general, for the associated jacobian fibration.\\

K3 surfaces containing linear systems of hyperelliptic curves were studied systematically by Dolgachev \cite{DolgachevSpK3} and Reid \cite{ReidHypLinSysK3}, who obtained essentially equivalent classifications.  Dolgachev terms such surfaces \emph{special}, while Reid calls them \emph{hyperelliptic}.  In the terminology of \cite{DolgachevSpK3}, the present paper concerns pencils of hyperelliptic curves of genus $2$ on special K3 surfaces of \emph{class~$2$}.\\

Even when the underlying K3 surface is known to satisfy the potential Hilbert property, for instance, when it admits multiple genus-$1$ fibrations \cite{MezzedimiGvirtzchenNonThinRatPtsK3}, our approach provides an explicit bound on the degree of the field extension $l/k$ over which $\mathcal{R}(\eta,l)$ is not thin.  Such a bound does not follow directly from \cite{MezzedimiGvirtzchenNonThinRatPtsK3} and \cite{ColliotTheleneRankJumps}, as the former does not give an explicit bound on the degree of the field over which the Hilbert property holds.\\

Our method relies crucially on the notion of \emph{saliently ramified multisections}, introduced by Bogomolov and Tschinkel in the context of elliptic surfaces \cite{BogTschRPEF}. Pasten and Salgado \cite{SalgadoPasten} proved that after base change they become sections that are independent of the pullback of the original Mordell--Weil group, providing a tool to force rank jumps.  This technique was applied successfully in \cite{SalgadoPasten} and more recently in \cite{GarbagnatiSalgado}.  In our setting, we produce multisections of genus at most 1 for the pencil of genus-$2$ curves on the K3 surface. Under the hypothesis of Theorem~\ref{thm1 intro} the multisections are rigid, whereas in Theorem~\ref{thm2 intro} they move in a linear family.\\

\subsection{Organization of the text}
\mbox{}
\vspace{5pt}

This article is organized as follows: Section~\ref{sec: Background} recalls the necessary background on fibered surfaces and their jacobians, specialization theorems, multisections, and thin sets.  Section~\ref{sec: pencils on K3s} focuses on pencils of hyperelliptic genus-$2$ curves on K3 surfaces and contains the proofs of our main theorems.  Finally, Section~\ref{sec:examples} presents explicit examples of K3 surfaces with small Picard number to which our results apply. These examples are not known to satisfy the previously mentioned methods, so our constructions yield new families of abelian fibrations with the rank‑jump property.\\

\section{Background}\label{sec: Background}

Throughout this article, we let $k$ denote a number field. By a \emph{variety} we mean a separated scheme of finite type over $k$ that is geometrically reduced. A \emph{curve} (resp.\ a \emph{surface}) is a variety of pure dimension $1$ (resp.\ $2$). Unless explicitly stated otherwise, all varieties and morphisms are defined over the base field $k$.

For a field extension $l/k$ and a $k$-variety $X$, we write
\[
X_l := X \times_k \operatorname{Spec}(l)
\]
for the base change of $X$ to $l$. If $f : X \to Y$ is a morphism of $k$-varieties, we denote the induced morphism after base change by $f_l : X_l \to Y_l$.\\

\subsection{Fibered surfaces and Jacobians} \label{fibsurfandjac}
\mbox{}
\vspace{5pt}

The material of this subsection is standard. We follow Hassett's exposition (\cite[Section 4]{Hassett}) and present it here for the sake of fixing the notation for the rest of the paper. 

\begin{definition}
Let $S$ be a smooth, projective, geometrically integral curve. 
\begin{enumerate}
    \item An \textit{$S$-fibered variety} is a smooth, projective, geometrically integral variety $\mathcal{X}$ together with a dominant morphism $\pi: \mathcal{X} \rightarrow S$. If $S$ is clear from the context, we call it a \textit{fibered variety}.
    \item An \textit{$S$-fibered surface} is an $S$-fibered variety $\pi: \mathcal{X} \rightarrow S$ with $\dim \mathcal{X}=2$.
    \item An \textit{$S$-abelian fibration} is an $S$-fibered variety $\pi: \mathcal{A} \rightarrow S$ whose generic fiber is an abelian variety over the function field of $S$. An abelian fibration of relative dimension 1 is an \textit{elliptic fibration}.
\end{enumerate}
\end{definition}

Let $\pi: \mathcal{X} \rightarrow S$ be a fibered variety. For each closed point $s \in S$, we denote by $\mathcal{X}_s$ the fiber of $\pi$ over $s$ and by $\xi$ the generic point of $S$.  Since $\mathcal{X}$ is regular and integral, and $\mathrm{char} \ k=0$, the generic fiber $\mathcal{X}_\xi$ is a smooth integral variety over $k(S)$.

We define the smooth locus
\begin{equation} \label{smoothlocus}
    S^0:= \{ \ s \in S \ | \ \pi^{-1}(s)= \mathcal{X}_s \text{ is smooth} \ \}
\end{equation}
and denote by \[\pi^0: \mathcal{X}^0 \rightarrow S^0\] the restriction of $\pi$ to $S^0$. 

If the generic fiber $\mathcal{X}_\xi$ is geometrically connected (and hence geometrically integral), then $\pi_* \mathcal{O}_\mathcal{X}=\mathcal{O}_S$ and all the fibers of $\pi$ are geometrically connected. In particular, for every $s\in S^0$, the fiber $\mathcal{X}_s$ is a smooth, geometrically integral variety.

\begin{definition}
Let $\pi: \mathcal{X} \rightarrow S$ be a fibered variety. A geometrically integral curve $M \subset \mathcal{X}$ is called a \emph{multisection of degree $n$} if the restriction \[\pi_{|_M}: M \rightarrow S\] is a surjective and finite morphism of degree $n$.
\end{definition}

Let $\pi: \mathcal{X} \rightarrow S$ be an $S$-fibration, let $M\subset \mathcal{X}$ be a multisection of $\pi$ and let \[\hat{M} \rightarrow M\] be its normalization. The composition \[g: \hat{M} \rightarrow M \overset{\pi_{|_M}}{\longrightarrow} S\] is a finite morphism of smooth curves. Let $\mathcal{X}_{\hat{M}}$ be the minimal desingularization of the fiber product $\mathcal{X} \times_S \hat{M}$ and let $\pi_{\hat{M}} : \mathcal{X}_{\hat{M}} \rightarrow \hat{M}$ be the projection. The generic fiber of $\pi_{\hat{M}}$ is $$(\mathcal{X}_\xi)_{k(M)}=\mathcal{X}_\xi \times_{k(S)} k(M),$$
so $\pi_{\hat{M}}: \mathcal{X}_{\hat{M}} \rightarrow {\hat{M}}$ is again a fibered variety. Since $\pi^0:\mathcal{X}^0 \rightarrow S^0$ is smooth, its base change \[\pi_{\hat{M}}^{-1}(\hat{M}^0) \cong \mathcal{X}^0 \times_{S^0} \hat{M}^0 \ \rightarrow \hat{M}^0, \, \,\, \, \, \,  \, \hat{M}^0:=g^{-1}(S^0)\] is also smooth. In particular, \[ \mathcal{X}^0_{\hat{M}^0}:=\pi_{\hat{M}}^{-1}(\hat{M}^0)\cong \mathcal{X}^0 \times_{S^0} \hat{M}^0 \] is a smooth open $k$-subvariety of $\mathcal{X}_{\hat{M}}$.

Each section $\sigma : S \rightarrow \mathcal{X}$ induces a section of $\mathcal{X} \times_S \hat{M} \rightarrow \hat{M}$ as illustrated in the following fiber diagram:
\[\begin{tikzcd}
	\hat{M} \\
	S & {\mathcal{X} \times_S \hat{M}} & \hat{M} \\
	& {\mathcal{X}} & S
	\arrow[from=1-1, to=2-1]
	\arrow["\exists"{description}, from=1-1, to=2-2]
	\arrow["{\mathrm{Id}}"', curve={height=-12pt}, from=1-1, to=2-3]
	\arrow["\sigma"', from=2-1, to=3-2]
	\arrow[from=2-2, to=2-3]
	\arrow[from=2-2, to=3-2]
	\arrow["\lrcorner"{anchor=center, pos=0.125}, draw=none, from=2-2, to=3-3]
	\arrow[from=2-3, to=3-3]
	\arrow[from=3-2, to=3-3]
\end{tikzcd}\]
This yields a rational section $\hat{M} \dashrightarrow \mathcal{X}_{\hat{M}}$. Since $\hat{M}$ is smooth and $\mathcal{X}_{\hat{M}}$ is projective, it extends uniquely to a morphism \[\sigma_{\hat{M}}: \hat{M} \rightarrow \mathcal{X}_{\hat{M}}\] which is a section of $\pi_{\hat{M}}$. In this way, we obtain an injection \[\mathcal{X}(S) \rightarrow \mathcal{X}_{\hat{M}}(\hat{M}).\]

From now on, we assume that $\pi: \mathcal{X} \rightarrow S$ is a fibered surface whose generic fiber $X:=\mathcal{X}_\xi$ is a smooth, geometrically connected curve of genus $g$ over $k(S)$. Then there is a natural correspondence
\begin{equation} \label{msecptcorresp}
\begin{array}{c c c}
     \{ \ \text{closed points of } X \ \text{of degree } n \}& \leftrightarrow & \{ \ \text{multisections of } \pi \text{ of degree } n \}\\
     P & \mapsto & \overline{P} \subset \mathcal{X}\\
     M \cap X& \mapsfrom & M
\end{array}    
\end{equation}
see \cite[Proposition 8.3.4]{LiuAG} for a proof.

Under this correspondence, the inclusion $\mathcal{X}(S) \subset \mathcal{X}_{\hat{M}}(\hat{M})$ constructed above coincides with the natural inclusion \[X(k(S)) \subset X(k(M))=X_{k(M)}(k(M)).\] 

The restriction $\pi^0: \mathcal{X}^0 \rightarrow S^0$ is a smooth family of irreducible curves. We consider the  relative Jacobian $$\eta^0:\mathcal{J}^0=\mathrm{Pic}^0_{\mathcal{X}^0/S^0} \rightarrow S^0$$ 
which is an abelian scheme over $S^0$ (\cite[Theorem and Definition 27.137]{gwAG2}). The morphism $\eta^0$ is projective and since $S^0$ is quasi-projective, so is $\mathcal{J}^0$. For each $s \in S^0$, the fiber $\mathcal{J}^0_s$ is the Jacobian of $\mathcal{X}_s$.

The relative Jacobian $\eta^0 : \mathcal{J}^0 \rightarrow S^0$ admits a smooth compactification to an abelian fibration \[\eta: \mathcal{J} \rightarrow S.\]

We denote by \[J:=\mathcal{J}_\xi=\mathrm{Jac}(X)\] the jacobian of the generic fiber.
\indent\par
\indent\par
Let $K:=k(S)$, and fix an algebraic closure $\overline{K}$ of $K$. Let $O\in \mathcal{X}_\xi(K)$, and let \[ \sigma_O: S \rightarrow \mathcal{X}\] be the corresponding section, so that $\sigma_O(S)=\overline{O}$. By \cite[Corollary 27.143]{gwAG2} we obtain a closed immersion 
\[j^0_O: \mathcal{X}^0 \hookrightarrow \mathcal{J}^0\]
which on each fiber coincides with the Abel-Jacobi map \[j^0_{O,s}: \mathcal{X}^0_s \hookrightarrow \mathcal{J}^0_s, \, \, \, \, x \mapsto[x-O_s],\] where $O_s=\overline{O} \cap \mathcal{X}_s$. We write \[j_O:=j^0_{O,\xi}:X \hookrightarrow J\] for the induced embedding on the generic fiber.
\indent\par
\indent\par
Let $P \in X$ be a closed point of degree $n$, and let $M:= \overline{P} \subset \mathcal{X}$ be the corresponding multisection of $\pi$  (\ref{msecptcorresp}). Let 
\[ 
\hat{M} \rightarrow M \subset \mathcal{X} 
\] be its normalization. Denote by $K(P)$ the residue field of $P$. Notice that $K(P)=k(M)$. The closed point $P$ corresponds to a $\mathrm{Gal}(\overline{K}/K)$-orbit of points \[\{P_1, \ldots, P_n\} \subset X(\bar{K})\] associated with embeddings
\[ \alpha_1, \ldots , \alpha_n: K(P) \hookrightarrow \overline{K}.
\]

Set $K_i:=\alpha_i(K(P))$. Identifying $K(P)$ with $K_1=\alpha_1(K(P))$, we may assume that $K(P) \subset \overline{K}$ and $\alpha_1=\mathrm{Id}_{K(P)}$. Each point $P_i$ determines a closed embedding
\[j_i : X_{K_i} \hookrightarrow J_{K_i}, \, \, \, x \mapsto[x-P_i]\] defined over $K_i$.
\indent\par
\indent\par
For each $i=1, \ldots ,n$, the base change of the closed embedding $\mathrm{Spec}(K(P)) \hookrightarrow X$ to $K_i$ yields a degree-$n$ zero cycle in $X_{K_i}$, hence a $K_i$-point of $\mathrm{Sym}^nX_{K_i}$. Its image under the natural morphism 
\[ 
\mathrm{Sym}^nX_{K_i} \rightarrow J_{K_i}
\]
induced by $P_i$ is
\begin{equation} \label{neosections}
    s^i_M:=(P_1+ \cdots + P_n)-nP_i \in J(K_i).
\end{equation}
\indent\par
Let $F$ be the normal closure of $K(P)$ in $\overline{K}$. Then all fields $K_i$ are contained in $F$ and the points $s^1_M, \ldots,s_M^n$ are permuted by the action of $\mathrm{Gal}(F/K)$. In particular, they all have the same order in $J(F)$. 

Finally, for all $1 \leq i,j \leq n$, we have 
\begin{equation} \label{relation1}
 s^i_M-s^j_M=n(P_j-P_i)=n \cdot j_i(P_j) \in J(F)   
\end{equation}
and for every $1 \leq i \leq n$,
\begin{equation} \label{relation2}
   s^i_M= \sum_{j=1}^nP_j-nP_i = \sum_{j=1}^n j_i(P_j) \in J(F). 
\end{equation}

\subsection{Abelian fibrations and specialization}
\mbox{}
\vspace{5pt}

Let $\eta : \mathcal{A} \rightarrow S$ be an abelian fibration over $k$ and let $A$ be its generic fiber. The $k(S)$-points of $A$ can be identified with the $k$-sections of $\eta$ and for $s \in S^0(k)$ we have the specialization homomorphism:
$$
\begin{array}{c c c c}
    \mathrm{sp}_s : & A(k(S))& \longrightarrow & \mathcal{A}_s(k)\\
     & \sigma & \longmapsto & \sigma(s).
\end{array}
$$  

\begin{theorem} \label{specialization}
For all but finitely many $s \in S^0(k)$ the specialization homomorphism $\mathrm{sp}_s$ is injective.
\end{theorem}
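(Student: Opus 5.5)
This is Silverman's specialization theorem, which builds on Néron. The plan is to derive it from height theory, as in Silverman's proof.

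\emph{Setup.} Since $S$ is a smooth geometrically integral curve over a number field, fix an ample divisor of degree one on $S$ and the associated Weil height $h_S$ on $S(\bar k)$. Fix also a symmetric ample line bundle $\mathcal{L}$ on $\mathcal{A}$ (or on $\mathcal{A}^0$, extended). On each fiber $\mathcal{A}_s$, $s\in S^0(k)$, take the Néron–Tate canonical height $\hat h_s$ attached to $\mathcal{L}_s$. On the generic fiber $A$, take the geometric canonical height $\hat h_\xi$ on $A(k(S))$ attached to $\mathcal{L}_\xi$.

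\emph{Main estimate.} The key step is Silverman's asymptotic
\[
\hat h_s(\sigma(s)) = \hat h_\xi(\sigma)\, h_S(s) + O\big(h_S(s)^{1/2}+1\big)
\]
for each fixed section $\sigma$. The plan for proving it:
\begin{itemize}
\item Compare the Weil height $h_{\mathcal{L}}(\sigma(s))$ with $\deg(\sigma^*\mathcal{L})\,h_S(s)$ by functoriality of heights, with an error $O(h_S(s)^{1/2}+1)$.
\item Apply the same comparison to $[2^N]\sigma$.
\item Pass to the limit as in Tate's construction. The crucial point is that the error terms, divided by $4^N$, stay uniformly controlled. This holds because the difference between the naive and canonical heights is bounded by $c_1 h_S(s)+c_2$, uniformly in $s$.
\end{itemize}

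\emph{Deduce injectivity.} Since $A(k(S))$ is finitely generated (Lang–Néron), it is enough to treat the free part modulo torsion together with the torsion separately.
\begin{itemize}
\item \emph{Torsion.} Nonzero torsion sections meet the zero section in only finitely many fibers. A $k(S)$-torsion point is a section disjoint from the zero section away from a finite set, so specialization is injective on the finite torsion subgroup outside finitely many $s$.
\item \emph{Free part.} Let $P_1,\dots,P_r$ be a basis modulo torsion. Then $\hat h_\xi$ is a positive definite quadratic form on $A(k(S))\otimes\mathbb{R}$. The standard fact that $\hat h_\xi$ vanishes exactly on torsion for non-constant families, or on the $k(S)/k$-trace part, would need care; here we only need it modulo the trace, and we assume the family has trivial trace or handle the trace separately. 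Applying the estimate to the quadratic form $\hat h_s$ on the lattice, using the parallelogram law so that the estimate holds uniformly for all $\sum n_iP_i$, shows the following. For $h_S(s)$ large, $\hat h_s\big(\sum n_i P_i(s)\big) \ge c\,h_S(s)\sum n_i^2 >0$ unless all $n_i=0$.
\end{itemize}
So $\mathrm{sp}_s$ is injective for all $s$ with $h_S(s)$ large. By Northcott's property only finitely many $s\in S(k)$ have bounded height, and the statement follows.

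\emph{Main obstacle.} The hard step is the uniformity in the main estimate: the error in $|h_{\mathcal{L}_s}-\hat h_s|$ must be uniform in $s$ and grow at most linearly in $h_S(s)$. For this I would use the Néron–Tate telescoping argument, with the duplication constant bounded by a height of the coefficients of the multiplication-by-2 map on the family. Alternatively, since the paper only needs the statement, I would simply invoke \cite{SilvermanSpecialization}, Theorem C, together with \cite{NeronSpecialization} for the trace part.
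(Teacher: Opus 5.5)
Your outline is Silverman's proof of \cite[Theorem~C]{SilvermanSpecialization}, and for trivial $k(S)/k$-trace it is fine; the weaker error $o(h_S(s))$ in the main estimate already suffices. The gap is the case of nontrivial trace $(T,\tau)$, which you explicitly set aside (``we assume the family has trivial trace or handle the trace separately'').

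Your key step fails there. The height $\hat h_\xi$ vanishes on $\tau T(k)$: in a constant family, a section $c\in T(k)$ satisfies $\hat h_s(c)=\hat h_T(c)$ for every $s$. So $\hat h_\xi$ is only positive semi-definite on $A(k(S))\otimes\mathbb{R}$, and the bound $\hat h_s\big(\sum n_iP_i(s)\big)\ge c\,h_S(s)\sum n_i^2$ is false in the trace directions. Your fallback does not repair this. Silverman's Theorem~C assumes trivial trace, and N\'eron's theorem \cite{NeronSpecialization} only gives injectivity outside a thin set, not outside a finite set. Removing the trace hypothesis is exactly what the paper's proof is about.

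The paper argues as follows.
\begin{itemize}
\item If $S(k)$ is finite there is nothing to show, so by Faltings $S$ has genus $0$ or $1$.
\item Specialization is injective on torsion. For non-torsion points one may replace $A$ by an isogenous $B\times_K T_K$ with $B$ of trivial trace, and Silverman's theorem handles $B$.
\item The $k$-sections of the constant part $T\times_k S$ are $k$-morphisms $S\to T$. For $S\cong\mathbb{P}^1_k$ these are constant, so $\mathrm{sp}_s=\mathrm{sp}^B_s\times\mathrm{id}_{T(k)}$, which is injective whenever $\mathrm{sp}^B_s$ is. This is the argument of \cite[Proposition~3.1]{HindrySalgado}.
\item For genus $1$, the paper invokes Djamanenko--Manin via the map $\phi\mapsto t_{-\phi(0_S)}\circ\phi$.
\end{itemize}

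Be careful if you try to reproduce the genus-$1$ step. That map kills the constant sections $T(k)$, so Djamanenko--Manin only controls the morphisms sending $0_S$ to $0$. In fact the constant part genuinely obstructs the statement in genus $1$. Take the second projection $E\times_kE\to E$ with $E(k)$ infinite. For every $s_0\in E(k)$, the section $x\mapsto(x-s_0,x)$ is nonzero and specializes to $0$ at $s_0$. Taking the fiber product with any non-constant family over $E$ does not change this. So the trace case can be closed only when every $k$-morphism $S\to T$ is constant, for example when $S\cong\mathbb{P}^1_k$.
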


The theorem follows from the specialization Theorem \cite[Theorem~C]{SilvermanSpecialization} by Silverman when the generic fiber of $\eta : \mathcal{A} \rightarrow S$ has a trivial $k(S)/k$-trace. We show that this hypothesis is not needed. We use the following result by Manin from \cite{pTorsManin}. The version below is from \cite[Theorem~5.2.1]{SerreLecturesMW}.
\begin{theorem}[Djamanenko-Manin] \label{djamanenkomanin}
Let $A$ be an abelian variety and $X$ a projective, normal curve, both defined over a number field $k$. For a fixed $x_0 \in X(k)$, let
$$A_{x_0}(X):=\{ \ f \in \mathrm{Hom}_k(X,A) \ | \ f(x_0)=0_A \}.$$
Let $\{f_1, \ldots , f_r \}$ be a set of free generators of $A_{x_0}(X)$. For all but finitely many $x \in X(k)$, the points $f_1(x), \ldots , f_r(x) \in A(k)$ are $\mathbb{Z}$-linearly independent.
\end{theorem}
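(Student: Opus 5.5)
The plan is to reduce to the case where $X$ is an elliptic curve, and then to turn a linear relation among the $f_i(x)$ into the vanishing of a Néron--Tate height. Since $k$ is perfect and $X$ is normal, $X$ is a smooth projective curve; we may assume it is geometrically integral, since otherwise $X(k)$ is empty. I would split according to the genus $g$ of $X$.

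\textbf{Small and large genus.} If $g=0$, every morphism $X\to A$ is constant, because $\mathbb{P}^1$ admits no nonconstant map to an abelian variety. So $A_{x_0}(X)=0$, $r=0$, and there is nothing to prove. If $g\geq 2$, Faltings' theorem says $X(k)$ is finite, so the statement holds vacuously: ``all but finitely many'' excludes every point.

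\textbf{Genus one.} The real content is the case $g=1$. Then $(X,x_0)=:E$ is an elliptic curve over $k$ with origin $x_0$. By rigidity, every $f\in A_{x_0}(X)$ is a homomorphism of abelian varieties, so $A_{x_0}(X)=\mathrm{Hom}_k(E,A)$, a free $\mathbb{Z}$-module with basis $f_1,\dots,f_r$. Fix a symmetric ample line bundle $L$ on $A$ and let $\hat h_A$ be the associated Néron--Tate height. For $n=(n_1,\dots,n_r)\in\mathbb{Z}^r$, set $g_n=\sum n_if_i$. Then $g_n^*L$ is a symmetric line bundle on $E$, and functoriality of Néron--Tate heights gives
\[
\hat h_A(g_n(x))=\tfrac{1}{2}\deg(g_n^*L)\,\hat h_E(x)\qquad(x\in E(\bar k)),
\]
where $\hat h_E$ is the height attached to the degree-$2$ bundle $\mathcal{O}(2x_0)$. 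The function $Q(n):=\deg(g_n^*L)$ is a quadratic form in $n$, because $g\mapsto g^*L$ is quadratic on $\mathrm{Hom}(E,A)$ for symmetric $L$.

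The key observation is that $Q(n)>0$ whenever $n\neq 0$. Indeed, $g_n$ is then a nonzero homomorphism, hence has finite kernel, so $g_n^*L$ is the pullback of an ample bundle under a finite map and is therefore ample.

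Now suppose $x\in E(k)$ and the points $f_1(x),\dots,f_r(x)$ are linearly dependent. Then $g_n(x)=0$ for some $n\neq 0$. This forces $Q(n)\hat h_E(x)=0$, hence $\hat h_E(x)=0$, so $x$ is a torsion point of $E(k)$. By Mordell--Weil, or simply because torsion over a number field is finite, there are only finitely many such $x$. This proves the theorem.

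\textbf{Main obstacle.} The step needing most care is the height identity together with the positivity of $Q$. The point is to get a single positive-definite form controlling all relations $n$ at once, rather than handling each kernel $\ker g_n$ separately; there are infinitely many $n$, so a kernel-by-kernel argument would not give finiteness. If one prefers to avoid Faltings, the higher-genus case would instead have to be handled through the Jacobian: every $f$ factors as $f=\varphi\circ j_{x_0}$ with $\varphi\in\mathrm{Hom}(\mathrm{Jac}X,A)$. Positivity then fails pointwise, so I would not attempt that route and would rely on Faltings as above.
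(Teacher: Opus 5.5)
Your proof is correct. Note, however, that the paper does not prove this statement: it only quotes it from Manin and from Serre's \emph{Lectures on the Mordell--Weil theorem}. The proof there is a single height argument that works in every genus. For fixed $x$, the map $n\mapsto \hat h_A(\sum n_i f_i(x))$ is a quadratic form whose coefficients satisfy $\langle f_i(x),f_j(x)\rangle = c_{ij}\,h_X(x)+O\bigl(\sqrt{h_X(x)}\bigr)$, where $(c_{ij})$ is (a normalization of) your positive definite form $Q(n)=\deg\bigl((\sum n_if_i)^*L\bigr)$. Hence the form is positive definite once $h_X(x)$ is large, and by Northcott only finitely many points of $X(k)$ have bounded height. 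So the route you decline in your last paragraph is exactly the classical one, and it does work: positivity survives up to an $O(\sqrt{h})$ error, which is harmless for large height. You instead dispose of $g\ge 2$ by Faltings and, in genus one, use rigidity to make the height identity exact. This gives a short argument with an explicit exceptional set, contained in $E(k)_{\mathrm{tors}}$. It also covers precisely the case in which the paper applies the theorem: $S$ an elliptic curve, after the paper has itself invoked Faltings to exclude higher genus. What it gives up is the original purpose of Demjanenko--Manin, which is to prove finiteness statements for curves of genus $\ge 2$ without Faltings. Finally, in genus one you do not need heights at all. If $g_n(x)=0$ with $n\neq 0$, then $x$ lies in the finite group $\ker g_n$, so $x\in E(k)_{\mathrm{tors}}$. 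Every such kernel sits inside this one finite group, so the kernel-by-kernel argument you dismiss as insufficient in fact gives finiteness immediately.
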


By \cite[Proposition~3.1]{HindrySalgado}, if the genus of $S$ is 0, then the trivial trace hypothesis is not needed. For the sake of completion, we reproduce the argument here.

\begin{proof}[Proof of Theorem \ref{specialization}]
The statement is trivially true if $S(k)$ is finite so by Falting's theorem \cite[Satz 7]{Faltings1983}, we may assume that the genus of $S$ is either 0 or 1 and $|S(k)|=\infty$. Let $A$ be the generic fiber of $\eta: \mathcal{A} \rightarrow S$, an abelian variety over $K:=k(S)$. Assume that the $K/k$-trace $T:= \mathrm{Tr}_{K/k} \ A$ of $A$ is not trivial.

The specialization map is always injective on torsion points and to deal with the non-torsion points we may replace $A$ by an isogenous abelian variety. If $T$ is nontrivial, the abelian variety $A$ is isogenous to a product $B \times_{K} T_K$ for some abelian variety $B$ with trivial trace, so we may assume that $A=B \times_K T_K$. Since  the result is true for trivial trace families \cite{SilvermanSpecialization}, we can reduce to the case when the family $\eta : \mathcal{A} \rightarrow S$ is a constant family, that is, $A=B \times_k S$ for some abelian variety $B$ over $k$.

In this case, observe that sections $S \rightarrow B \times_k S$ correspond to morphisms $S \rightarrow B$. If the genus of $S$ is $0$, that is, $S \cong \mathbb{P}^1_k$, then all morphisms $S \cong \mathbb{P}^1_k \rightarrow B$ are constant  and the claim is trivially true. If the genus of $S$ is $1$, that is, if $S$ is an elliptic curve, every morphism $S \rightarrow B$ corresponds to a homomorphism of abelian varieties $S \rightarrow B$ sending $0_S \mapsto 0_B$, that is, following the notation of Theorem \ref{djamanenkomanin},
$$
\begin{array}{c c c}
     (B \times_k S)(S) & \overset{\cong}{\longrightarrow}& B_{0_S}(S)  \\
     s \mapsto (\phi(s),s) \ & \longmapsto & t_{-\phi(0_S)} \circ \phi
\end{array}
$$
where $t_P$ for $P \in B(k)$ denotes the ``translation by $P$" morphism. In this case, the claim follows by Theorem \ref{djamanenkomanin}.

\end{proof}

Given a finite field extension $l/k$, the group $A(l(S))$ is isomorphic to the group of sections of $\mathcal{A}_l \rightarrow S_l$ and by the Lang-Néron theorem \cite[Theorem~6.1]{FundamentalsLang} it is finitely generated. The \emph{generic Mordell--Weil rank} of $\eta$ (over $l$) is the rank of $A(l(S))$ and we denote it by $r(\eta,l)$. Theorem \ref{specialization} gives
$$\operatorname{rank} \mathcal{A}_s(l) \geq r(\eta,l) $$
for all but finitely many $s \in S^0(l)$. The set of \emph{rank jumps (over $l$)} is defined as
\begin{equation} \label{eq:rankjumpset}
    \mathcal{R}(\eta,l) := \bigl\{ s \in S^0(l) \mid \operatorname{rank}\mathcal{A}_s(l) > r(\eta,l) \bigr\}.
\end{equation}

Finally, if $\eta : \mathcal{A} \rightarrow S$ arises as the jacobian fibration of a fibration of curves $\pi \colon \mathcal{X} \rightarrow S$, we write $\mathcal{R}(\pi,l)$ for $\mathcal{R}(\eta,l)$ by abuse of notation.\\

\subsection{Multisections of fibrations and rank jumps}
\mbox{}
\vspace{5pt}

Let $\eta: \mathcal{A} \rightarrow S$ be an abelian fibration defined over $k$ and let $M$ be a multisection of $\eta$. Following the notation established in subsection \ref{fibsurfandjac}, let $\hat{M} \rightarrow M$ be the normalization of $M$ and  $\mathcal{A}_{\hat{M}}$ the minimal desingularization of the base change $\mathcal{A}\times_S \hat{M}$. The morphism $\eta$ induces a fibration $\eta_{\hat{M}}: \mathcal{A}_{\hat{M}} \rightarrow \hat{M}$. Since every section of $\eta$ pulls-back to a section of $\eta_{\hat{M}}$, the following holds
\begin{equation}\label{eq:generic rank multisection}
r(\eta_{\hat{M}},k) \geq r(\eta,k).
\end{equation}
Let
$$g:\hat{M}\rightarrow M\overset{\eta|_M}{\longrightarrow} S$$
be the composite morphism. By Theorem \ref{specialization} applied to $\eta_{\hat{M}}$, for all but finitely many $t \in \hat{M}(k)$,
\begin{equation}\label{eq: specialization multisection}
\operatorname{rank}(\mathcal{A}_{g(t)}(k))=\operatorname{rank}\big((\mathcal{A}_M)_t(k) \big) \geq r(\eta_{\hat{M}},k) \geq r(\eta,k),
\end{equation}
where the equality follows from the isomorphism $\mathcal{A}_{g(t)} \cong (\mathcal{A}_M)_t$.

Our aim is to produce situations where the inequality~\eqref{eq:generic rank multisection} is strict. To that end, we use the notion of \emph{saliently ramified multisections} introduced by Bogomolov and Tschinkel~\cite[Definition~4.3]{BogTschRPEF}. 

\begin{definition}\label{def: saltiently ramified}
Let $\pi: \mathcal{X} \rightarrow S$ be a fibered variety and let $M \subset \mathcal{X}$ be a multisection. Denote by $\hat{M} \rightarrow M$ the normalization of $M$.
We say that $M$ is saliently ramified (with respect to $\pi$) if the composite morphism
\[
\hat{M} \xrightarrow{\nu} M \overset{\pi|_M}{\longrightarrow} S
\]
is ramified over the open subset $S^0 \subset S$.
\end{definition}

\begin{remark}
Definition \ref{def: saltiently ramified} differs slightly from \cite[Definition 4.3]{BogoTschinkelPotDensityEllK3}, we allow $M$ to be singular. For $M$ to be saliently ramified, a smooth fiber of $\pi$ may meet $M$ with non‑reduced intersection at a smooth point of $M$, while $M$ may be singular elsewhere. 
\end{remark}

Let $\pi: \mathcal{X} \rightarrow S$ be a fibered surface and let 
$$\eta: \mathcal{J} \rightarrow S$$ 
be a smooth compactification of the relative jacobian ${\eta^0:\mathcal{J}^0=\mathrm{Pic}^0(\mathcal{X}^0/S^0) \rightarrow S^0}$. The morphism $\eta^0:\mathcal{J}^0 \rightarrow S^0$ is an abelian scheme \cite[Theorem/Definition~27.137]{gwAG2}. Since $\mathrm{char} \ k =0$, the multiplication-by-$N$ isogeny 
$$[N]:\mathcal{J}^0 \rightarrow \mathcal{J}^0$$  
is finite and étale for every integer $N \in \mathbb{Z}$ \cite[Proposition~27.187]{gwAG2} and consequently, its kernel $\mathrm{ker} \ [N] \rightarrow S^0$ is a finite étale group scheme over $S^0$.

\begin{lemma} \label{rkjplemma}
Let $M\subset \mathcal{X}$ be a saliently ramified multisection of $\mathcal{X} \overset{\pi}{\rightarrow} S$ and let $K=k(S)$, $L=k(M)$ be the corresponding function fields. Then,
$$\mathbb{Z} \cdot s^1_M \cap J(K)=\{ 0 \}$$
in $J(L)$. In particular, the Mordell-Weil rank of $J(L)$ is strictly larger than the Mordell-Weil rank of $J(K)$. 
\begin{proof} 
We follow the notation introduced in (\ref{neosections}). Let $n$ be the degree of the multisection $M$ and let $F$ be the normal closure of $L$ in $\overline{K}$. Assume, for contradiction, that there exists a nonzero integer $m \in \mathbb{Z}$ such that $[m](s_M^1)=Q \in J(K)$. Since the points $s_M^1, \ldots , s_M^n$ are $\mathrm{Gal}(F/K)$-conjugate, we obtain $m\cdot s^i_M=Q$ for all $1 \leq i \leq n$. By (\ref{relation1}), for each $1 \le j \le n$, the following holds in $J(F)$:
\[mn(j_1(P_j)) = m(nP_j-nP_1)=m(s^j_M-s^1_M)=Q-Q=0.\]
 Hence
 \[j_1(P_j) \in \mathrm{ker} \ [mn]. \] 

Consider the closed subscheme $\mathrm{Spec}(L \otimes_K L) \subset X_L$ which defines a zero-cycle of degree $n$. The $\operatorname{Gal}(F/L)$-orbits of the set ${P_1,\ldots,P_n} \subset X(F)$ correspond bijectively to the closed points of $\mathrm{Spec}(L \otimes_K L)$ whose degree is given by the size of the corresponding orbit. Hence the image of this $0$-cycle under $j_1$ is contained in $\ker[mn] \subset J_L$.

We extend this picture over the open subschemes $S^0$ and $\hat{M}^0$. Let \[j^0_1: \mathcal{X}^0_{\hat{M}^0} \hookrightarrow \mathcal{J}^0_{\hat{M}^0}\] be the extension of $j_1$ over $\hat{M}^0$. 

Since $M$ is saliently ramified, the morphism $\hat{M}^0 \to S^0$ is ramified, and consequently the fiber product
\[
\hat{M}^0 \times_{S^0} \hat{M}^0 \rightarrow \hat{M}^0
\]
is also ramified. On the other hand, by the previous discussion, 
\[ j^0_1(\hat{M}^0 \times_{S^0} \hat{M}^0) \subset \ker[mn].\]
This is impossible, since $\ker[mn] \to \hat{M}^0$ is étale. Therefore such nonzero $m$ cannot exist, and 
\[\mathbb{Z} \cdot s^1_M \cap J(K)=\{ 0 \}.\]
The final assertion on the ranks follows immediately.

\end{proof}
\end{lemma}

Our goal is to study the set rank jump set $\mathcal{R}(\eta,k)$ for the jacobian fibration $\eta:\mathcal{J} \rightarrow S$ associated with a fibration of curves $\pi:\mathcal{X} \rightarrow S$. Recall that by a slight abuse of notation we write $\mathcal{R}(\pi,k):=\mathcal{R}(\eta,k)$. 

As a direct consequence of Lemma \ref{rkjplemma}, we obtain the following key result.
 
\begin{proposition} \label{infptrkjump}
Let $\pi: \mathcal{X} \rightarrow S$ be a fibered surface over $k$ and let $ M \subset \mathcal{X}$ be a saliently ramified multisection of $\pi$. Then 
\[\pi(x) \in \mathcal{R}(\pi,k), \text{ for all but finitely many } x \in M(k).\]  

In particular, if $|M(k)|=\infty$, then $|\mathcal{R}(\pi,k)|=\infty$.
\begin{proof}
Let $\eta: \mathcal{J} \rightarrow S$ be the Jacobian fibration of $\pi$ and let $\nu \colon \hat{M} \to M$ be the normalization of $M$. By Lemma~\ref{rkjplemma}, the generic rank of the base-changed fibration
\[\eta_{\hat{M}} : \mathcal{J}_{\hat{M}} \rightarrow {\hat{M}}\] 
is strictly greater than the generic rank of $\eta$, that is,
\begin{equation} \label{rkstrictineq}
    r(\eta_{\hat{M}},k) > r(\eta,k).
\end{equation}
The specialization Theorem \ref{specialization} gives that the set
\[ \mathcal{S}:=\big\{ \ t \in \hat{M}(k) \ | \ \operatorname{rank} (\mathcal{J}_{\hat{M}})_t \geq r(\eta_{\hat{M}},k) \ \big\}\]
contains all but finitely many $t \in \hat{M}(k)$. Let $g := \pi \circ \nu$. For every $t \in \hat{M}^0(k)$, there is a canonical identification
$(\mathcal{J}_{\hat{M}})_t \cong \mathcal{J}_{g(t)}$
and therefore
\[g ( \mathcal{S} ) \subset  \mathcal{R}(\eta,k)\]
by the inequality \eqref{rkstrictineq}.  Since $\nu : \hat{M} \to M$ is birational, the claim follows.

\end{proof}
\end{proposition}

When the generic fiber of $\pi: \mathcal{X} \rightarrow S$ is a smooth, geometrically integral curve of genus 1, we recover \cite[Theorem 2.11]{BogoTschinkelPotDensEnriques}.

\begin{corollary} \label{cor: g1manyposrankfibers}
Let $\pi \colon \mathcal{X} \to S$ be a fibered surface over $k$, whose generic fiber is a smooth, geometrically connected curve of genus~$1$. Let $M \subset \mathcal{X}$ be a saliently ramified multisection of $\pi$. Then, \[|\mathcal{X}_{\pi(x)}(k)|=\infty \text{ for all but finitely many } x \in M(k).\] 
In particular, if $M(k)$ is infinite, then $\mathcal{X}(k)$ is Zariski dense in $\mathcal{X}$.\\
\end{corollary}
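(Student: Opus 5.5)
The corollary should follow by applying Proposition~\ref{infptrkjump} to the genus-$1$ fibration $\pi$ itself and then translating positive rank of the Jacobian into infinitude of rational points on the fiber. The Jacobian fibration $\eta\colon\mathcal{J}\to S$ of $\pi$ has generic fiber $J=\mathrm{Jac}(X)$, an elliptic curve over $K=k(S)$.

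By Proposition~\ref{infptrkjump}, for all but finitely many $x\in M(k)$ we have $s:=\pi(x)\in\mathcal{R}(\pi,k)$. By definition \eqref{eq:rankjumpset}, this means $s\in S^0(k)$ and
\[
\operatorname{rank}\mathcal{J}_s(k)>r(\eta,k)\ge 0 .
\]
So $\mathcal{J}_s(k)=\mathrm{Jac}(\mathcal{X}_s)(k)$ is infinite. Since $s\in S^0$, the fiber $\mathcal{X}_s$ is a smooth, geometrically integral genus-$1$ curve. It also has a $k$-point, namely $x$ itself, because $x\in M\subset\mathcal{X}$ lies over $s$. Choosing $x$ as origin, the Abel--Jacobi map $y\mapsto[y-x]$ is an isomorphism $\mathcal{X}_s\cong\mathcal{J}_s$ over $k$. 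Hence $|\mathcal{X}_s(k)|=|\mathcal{J}_s(k)|=\infty$.

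This step is the one point needing care. Positive rank of the Jacobian alone would not suffice for a torsor without a rational point, and it is exactly the rational point $x$ on the multisection that makes $\mathcal{X}_s$ isomorphic to its Jacobian.

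For the density statement, suppose $M(k)$ is infinite. Since $\pi|_M$ is finite, the images $\pi(x)$ for $x\in M(k)$ form an infinite subset of $S(k)$; removing the finitely many exceptions still leaves infinitely many fibers $\mathcal{X}_s$. Each of these fibers is an irreducible smooth curve with infinitely many $k$-points, so its $k$-points are Zariski dense in that fiber. Let $Z$ be the Zariski closure of $\mathcal{X}(k)$. Then $Z$ contains infinitely many distinct fibers of $\pi$. A proper closed subset of the surface $\mathcal{X}$ has only finitely many irreducible components of dimension $\le1$, so it can contain only finitely many fibers. Therefore $Z=\mathcal{X}$. None of these steps is a serious obstacle; the argument is essentially formal once Proposition~\ref{infptrkjump} is available.
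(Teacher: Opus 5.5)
Your argument is correct and is essentially the paper's own: the paper states this corollary without a separate proof, as an immediate consequence of Proposition~\ref{infptrkjump}, and you supply the details it leaves implicit. These are that the $k$-point $x$ makes $\mathcal{X}_{\pi(x)}$ isomorphic to its Jacobian, so positive rank of $\mathcal{J}_{\pi(x)}(k)$ transfers to the fiber, and the standard fiber-by-fiber Zariski density argument. One small adjustment: secure smoothness of $\mathcal{X}_{\pi(x)}$ by discarding the finitely many $x\in M(k)$ over singular fibers of $\pi$ (harmless since $\pi|_M$ is finite), rather than reading it off from $\pi(x)\in\mathcal{R}(\pi,k)$, whose defining $S^0$ refers to the Jacobian fibration.
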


\subsection{Thin subsets}
\mbox{}
\vspace{5pt}

We use Serre’s definition of thin sets \cite[Sec. 3.1]{SerreLecturesMW}.
\begin{definition}
Let $V$ be a variety over a field $k$. A subset $Z\subseteq V(k)$ is called \emph{thin} if it is contained in a finite union of subsets of $V$ of the following two types:
\begin{itemize}
    \item[(1)] proper closed subvarieties of $V$;
    \item[(2)] subsets of the form $\varphi(W(k))$, where $\varphi : W \to V$ is a generically finite dominant morphism of degree at least $2$, with $W$ an integral $k$-variety.
\end{itemize}
\end{definition}

As we are interested in infinite subsets of $\mathbb{P}^1(k)$, for $k$ a number field, we are concerned with subsets of type 2 in what follows.

\begin{definition}
A variety $V$ is said to satisfy the \textit{Hilbert property over $k$} if $V(k)$ is not a thin subset of $V$.
\end{definition}

The most prominent examples of varieties with the Hilbert property over $\mathbb{Q}$, or more generally number fields, are projective spaces, notably equivalent to Hilbert's irreducibility theorem (\cite[Chapter~9]{SerreLecturesMW}).\\

\section{Pencils of hyperelliptic curves of genus two on K3 surfaces}\label{sec: pencils on K3s}
\vspace{5pt}

Let $X$ be a K3 surface over a field $k$ of characteristic 0. If $X$ contains a hyperelliptic curve $C \subset X$ of genus 2, then the complete linear system $|C|$ is base point free and induces a generically finite morphism of degree two 
\[f: X \rightarrow \mathbb{P}^2_k\]
branched along a plane sextic $B \subset \mathbb{P}^2_k$ with at most simple singularities. Conversely, if $X$ is a K3 surface that admits a generically finite morphism of degree two $f: X \rightarrow \mathbb{P}^2_k$, the pullback of the complete linear system of lines in $\mathbb{P}^2_k$ gives a linear system in $X$ whose generic element is a hyperelliptic curve of genus 2.

Via Stein factorization we may write $f$ as 
\[X \overset{\mu}{\rightarrow} \bar{X} \overset{\bar{f}}{\rightarrow} \mathbb{P}^2_k\] 
where $\bar{f}$ is a finite morphism of degree 2 and $\mu$ is a birational morphism. The sextic $B$ is the zero locus of a homogeneous polynomial $F(x,y,z)$ of degree 6 and the surface $\overline{X}$ is given by in the weighted projective space $\mathbb{P}(1,1,1,3)$ by
\[w^2=F(x,y,z).\]

The surface $\bar{X}$ is singular exactly above the singular points of $B$ and $\mu: X \rightarrow \bar{X}$ is the minimal resolution of those singularities. 

 In what follows, we  denote by $B_{\mathrm{sing}}$ the singular locus of $B$ and by $B_{\mathrm{reg}}=B \setminus B_{\mathrm{sing}}$ the regular locus.\\

\subsection{The pull-back of lines} \label{subsection: pblines}
\mbox{}
\vspace{5pt}

We describe the curves in $X$ that arise as pull-backs of lines in $\mathbb{P}^2_k$ by the double cover $f$. As we are interested in a geometric description, we assume in what follows that $k$ is algebraically closed. 

Let $\ell \in |\mathcal{O}_{\mathbb{P}^2_k}(1)|$ be a line and let $C_\ell$ denote the strict transform of $\bar{f}^{-1}(\ell)$ in $X$. If $\ell \subset B$, then \[C_\ell = 2R,\] with $R\subset X$ isomorphic to $\ell$. 

Now assume that $\ell \not\subset B$. For $x \in \ell \cap B$ denote by $i_x(\ell,B)$ the intersection multiplicity of $\ell$ and $B$ at $x$. By Bézout's theorem, 
\[6=(\deg \ell) (\deg B)= \sum_{x \in \ell \cap B} i_x(\ell,B).\]
If $i_x(\ell \cap B)=1$, then $\bar{f}^{-1}(x)$ is a smooth point of the curve $\bar{f}^{-1}(\ell) \subset \bar{X}$. If $i_x(\ell,B)=n \geq 2$, then $\bar{f}^{-1}(\ell)$ has an $A_{n-1}$ singularity at $\bar{f}^{-1}(x)$. 

Since $\ell \not\subset B$, the divisor $\bar{f}^{-1}(\ell)$ is reduced. Because $\bar{f}$ is a double cover, exactly one of the following holds:
\begin{itemize}
    \item[a)]  $\bar{f}^{*} \ell$ is irreducible, if $i_x(\ell, B)$ is odd for some $x$;
    \item[b)] $\bar{f}^* \ell=R_1 + R_2$ for two rational curves $R_1$ and $R_2$, if $i_x(\ell, B)$ is even for every $x\in B$.
    \end{itemize}
In case (b) the line $\ell$ is called a \emph{tritangent line} of $B$.

In what follows, we focus on the subcases of (a) that are relevant to our applications.

\begin{lemma}\label{lemma: pull back line}
Let $\ell$ be a line in $\mathbb{P}^2$. Assume that $\ell$ is not tritangent to $B$. Then, the following hold:
\begin{itemize}

\item[i)] If $|B \cap \ell |=6$ then $C_\ell =f^{-1}(\ell) \cong \bar{f}^{-1}(\ell)$ is a smooth hyperelliptic curve of genus 2.

\item[ii)] If $|B \cap \ell |=5$ then $C_\ell$ is a geometrically integral curve of geometric genus 1.

\item[iii)] If $|B \cap \ell |=4$ then $C_\ell$ is a geometrically integral curve of geometric genus at most 1; 

\end{itemize}
\begin{proof}
In all cases, geometric integrality of $C_\ell$ follows from the fact that we are in the situation a) above.\\

The condition $|B \cap \ell |=6$ forces $i_x(\ell,B)=1$ for all $x \in \ell \cap B$, so $C_\ell$ is smooth. The intersection $B \cap \ell$ is precisely the branch locus of the morphism $f_{|_{C_\ell}}: C_\ell \rightarrow \ell \cong \mathbb{P}^1_k$ so the genus of $C_\ell$ is 2 by the Riemann-Hurwitz formula.\\

If $|B \cap \ell |=5$, exactly one point $y \in \ell \cap B$ satisfies $i_y(\ell,B)=2$ and $i_x(\ell,B)=1$ for the rest of points $x \in (\ell \cap B) \setminus \{ y \}$. Thus $\bar{f}^{-1}(\ell)$ has a node at $f^{-1}(y)$ and is smooth elsewhere. Let $E$ be the normalization of $\bar{f}^{-1}(\ell)$. The composition 
$$E \rightarrow \bar{f}^{-1}(\ell) \rightarrow \ell \cong \mathbb{P}^1_k$$ 
is branched over $(\ell \cap B) \setminus \{ y \}$. By the Riemann-Hurwitz formula, the genus of $E$ is 1.\\

If $|B \cap \ell |=4$, two possibilities can occur: either there are two points $y_1, y_2 \in \ell \cap B$ with $i_{y_1}(\ell,B)=i_{y_2}(\ell,B)=2$ and $i_x(\ell,B)=1$ for the remaining points $x \in (\ell \cap B) \setminus \{ y_1, y_2 \}$, or $i_y(\ell,B)=3$ for one $y \in \ell \cap B$ and $i_x(\ell , B)=1$ for the remaining $x \in (\ell \cap B) \setminus \{y\}$. In the former, $\bar{f}^{-1}(\ell)$ has two nodes and, in the latter, it has a cusp. By the Riemann-Hurwitz formula, the normalization of $\bar{f}^{-1}(\ell)$ has geometric genus 0 or 1, respectively.

\end{proof}
\end{lemma}

For the remainder of the section, we return to the assumption that $k$ is a number field. Fix an algebraic closure $\overline{k}$ of $k$.

Fix a point $P=[x_0:y_0:z_0] \in \mathbb{P}^2(k)$ and let $L_P\subset |\mathcal{O}_{\mathbb{P}^2_k}(1)|$ be the pencil of lines through $P$. We study the pullback pencil $f^*L_P$. A direct application of Lemma \ref{lemma: pull back line} gives the following description, which depends on $P$.

\begin{proposition}\label{prop: pull back pencil}
Let $P\in \mathbb{P}_k^2(k)$. The generic member of the moving part of the pencil $f^*L_P$ is:
\begin{enumerate}
    \item a smooth curve of genus 1, if $P$ is a singular point of $B$, or
    \item a smooth hyperelliptic curve of genus 2, otherwise.
 \end{enumerate}
\end{proposition}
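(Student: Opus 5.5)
The plan is to reduce to Lemma~\ref{lemma: pull back line} and Bertini's theorem, working over $\overline{k}$. Whether a curve is smooth and what its genus is can be checked after base change, and the generic member of a pencil defined over $k$ stays generic after base change. Lines through $P$ are parametrized by $\ell_t$, $t\in\mathbb{P}^1$. For $t$ general, $\ell_t$ is not a component of $B$, and it is not tritangent to $B$: the tritangent lines through a fixed point form a proper closed subset of the pencil unless every line through $P$ is tritangent. I will exclude that last case by the counting in the next two paragraphs, which shows that a general line through $P$ meets $B$ with at least one odd intersection multiplicity.

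\textbf{Case $P\notin B_{\mathrm{sing}}$.} Consider the projection from $P$ restricted to $B$. In characteristic~$0$ it has only finitely many ramification points. It also has finitely many lines through $P$ passing through a singular point of $B$, or tangent to $B$ at a point of $B_{\mathrm{reg}}$. If $P\in B_{\mathrm{reg}}$, the only extra constraint is that each line through $P$ meets $B$ at $P$ with multiplicity one, except the tangent line at $P$. Moreover, a line through $P$ is tangent to $B$ elsewhere only finitely often, because $B$ is not a union of lines through $P$. This holds since $B$ is reduced with simple singularities, so a component which is a line through $P$ would only contribute a fixed component. Hence the general $\ell_t$ meets $B$ in $6$ distinct points, and Lemma~\ref{lemma: pull back line}(i) gives a smooth genus-$2$ curve $C_{\ell_t}$. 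Since $\ell_t$ avoids $B_{\mathrm{sing}}$, the map $\mu$ is an isomorphism over it, so there is no fixed part coming from exceptional curves, except possibly a component of $B$ through $P$ that is a line, which is fixed. The point I expect to be the main obstacle is this degenerate situation, where $B$ contains a line through $P$. I would handle it by noting that this line is then a fixed component of $f^*L_P$. The moving part then corresponds to the residual quintic, which meets a general line in $5$ points, together with the fixed line. The branch locus on $\ell_t$ is still $6$ distinct points, since the fixed line meets $\ell_t$ only at $P$, and $P$ is not on the quintic when $P\notin B_{\mathrm{sing}}$.

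\textbf{Case $P\in B_{\mathrm{sing}}$.} A simple singularity has multiplicity $2$ or $3$. I treat multiplicity $2$ first, in which case $\ell_t$ meets $B$ at $P$ with multiplicity exactly $2$ for general $t$, avoiding the tangent cone directions. As in the previous case, $\ell_t$ meets $B$ transversally at $4$ further distinct points. Then $|B\cap\ell_t|=5$, and Lemma~\ref{lemma: pull back line}(ii) gives that $C_{\ell_t}$ is geometrically integral of geometric genus~$1$. The remaining task is to show that the strict transform in $X$ is smooth. The curve $\bar f^{-1}(\ell_t)$ has a node exactly at the singular point of $\bar X$ over $P$. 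Pulling back via the resolution $\mu$, the pencil $f^*L_P$ contains the exceptional curves over $P$ in its fixed part, and the moving part is the strict transform. A general member of the moving part of a base-point-free pencil is smooth by Bertini, so the key check is that, after removing the fixed exceptional components, the moving part is base point free. Its members are disjoint away from the exceptional locus, because the lines meet only at $P$. On the exceptional locus, distinct tangent directions at $P$ give distinct points on the exceptional curves, as in the blow-up of $P$. So the moving system has no base points, and its generic member is smooth, hence is the normalization, of genus $1$. For triple points ($D_n$, $E_6$, $E_7$, $E_8$), I would compute similarly: the intersection multiplicity at $P$ is $3$, so the genus drops to $1$ via Riemann--Hurwitz on the remaining $3$ simple branch points plus the odd point. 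This is consistent with the genus-$1$ claim, and smoothness again follows from Bertini.
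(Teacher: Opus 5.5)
Your route is the paper's: it presents the proposition as a direct application of Lemma~\ref{lemma: pull back line}, and your intersection counts for a general line $\ell_t$ through $P$ are right. These are six simple points if $P\notin B_{\mathrm{sing}}$, and otherwise multiplicity $2$ or $3$ at $P$ plus four or three simple points. One claim along the way is false, though: a line $\ell_0\subset B$ through $P$ is not a fixed component of $f^*L_P$. It is a \emph{member} $\ell_{t_0}$ of $L_P$, and $f^*\ell_{t_0}=2R$ is a single non-reduced member; the paper notes in the proof of Lemma~\ref{lemmatglines} that such a line gives a non-reduced fiber of $\pi_P$. It is not contained in $f^*\ell_t$ for $t\neq t_0$, and there is no ``residual quintic'' moving part. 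You simply discard $t_0$. For $t\neq t_0$ one has $\ell_t\cap\ell_0=\{P\}$, and your count of six branch points applies verbatim, so this ``main obstacle'' is not one.

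The genuine gap is smoothness when $P\in B_{\mathrm{sing}}$. Lemma~\ref{lemma: pull back line} only yields geometric genus $1$, so everything rests on the moving part being base point free along $f^{-1}(P)$. Your justification (distinct tangent directions give distinct points on the exceptional curves, ``as in the blow-up of $P$'') presupposes that $f$ factors through $\mathrm{Bl}_P\mathbb{P}^2$. This is not automatic: over a non-simple triple point such as $x^3+y^6=0$, the pencil of lines through $P$ does acquire a base point on the minimal resolution. So the ADE hypothesis has to be used here.

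A short argument runs as follows. Write $h=f^*\ell$ and let $Z$ be the fixed part of $f^*L_P$. It is supported on the exceptional curves $E_i$ over $P$, and $Z\neq 0$ because each $E_i$ lies in every $f^{-1}(\ell_t)$. Let $F=h-Z$ be the moving class. Since $h\cdot E_i=0$ and the $E_i$ span a negative definite even lattice, $F^2=2+Z^2\le 0$. Since $F$ moves in a pencil without fixed components, it is nef, so $F^2\ge 0$. Hence $F^2=0$ and distinct members are disjoint, so the pencil is base point free. Bertini then gives a smooth general member, and adjunction on the K3 surface gives genus $1$, with connectedness coming from the lemma. This treats double and triple points uniformly.

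Alternatively, you can get the factorization through $\mathrm{Bl}_P\mathbb{P}^2$ directly. Artin's theorem says the maximal ideal of a rational double point pulls back to an invertible ideal on the minimal resolution. Combine this with $\mathfrak{m}^2=(x,y)\,\mathfrak{m}$, which holds because $w^2=F\in(x,y)^2$. The paper itself only asserts that the base locus of $H_P$ is empty, so this is exactly the step you need to supply.
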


Let $H_P$ be the moving part of the pullback pencil $f^*L_P$, and let
\[
    \phi_{L_P}: \mathbb{P}^2_k \dashrightarrow \mathbb{P}^1_k, \ \text{and} \  \phi_{H_P}: X \dashrightarrow \mathbb{P}^1_k 
  \]
    
    be the rational maps induced by the pencils $L_P$ and $H_P$, respectively. If $P \in B_{\mathrm{sing}}(k)$, the base locus of $H_P$ is empty so $\phi_{H_P}$ is actually a morphism giving a genus-1 fibration on $X$. If $P \notin B_{\mathrm{sing}}(k)$, after a blow-up of the base locus of $H_P$, we obtain a fibered surface \begin{equation}\label{eq: genus 2 fibration}\pi_P: \mathcal{X}_P \rightarrow \mathbb{P}^1_k
    \end{equation} 
whose generic fiber is a smooth hyperelliptic curve of genus 2. The situation is summarized by the following commutative diagram:
\begin{equation} \label{diagramblowup}
    \begin{tikzcd}
	& {\mathcal{X}_P} \\
	& X & {\mathbb{P}^1_k} \\
	{\bar{X}} & {\mathbb{P}^2_k}
	\arrow["\nu_P"', from=1-2, to=2-2]
	\arrow["\pi_P", from=1-2, to=2-3]
	\arrow["{{\phi_{H_P}}}"'{pos=0.4}, dashed, from=2-2, to=2-3]
	\arrow["\mu"', from=2-2, to=3-1]
	\arrow["f"', from=2-2, to=3-2]
	\arrow["{\bar{f}}"', from=3-1, to=3-2]
	\arrow["{{\phi_{L_P}}}"', dashed, from=3-2, to=2-3]
\end{tikzcd}
\end{equation}
where $\nu_P$ is the blow-up of the base locus of $H_P$, and recall that $\bar{X} \subset \mathbb{P}(1,1,1,3)$ is the surface given by 
$$w^2=F(x,y,z).$$
When $P\in B_{\mathrm{sing}}(k)$, we also write  
$\pi_P:=\phi_{H_P}$, $\mathcal{X}_P:=X$, and $\nu_P:=\mathrm{id}$ to unify notation.

Given a line $\ell \subset \mathbb{P}^2_k$ , recall that $C_\ell \subset X$ is the strict transform of $\bar{f}^{-1}(\ell)$ by the blowup $\mu: X \rightarrow \bar{X}$. In what follows, we denote by $ \tilde{C_\ell}$ the strict transform of $C_\ell$ under the second blowup $\nu_P: \mathcal{X}_P \rightarrow X$. 

\begin{lemma} \label{pblinesmultsecram}
Let $\ell \subset \mathbb{P}^2_k$ be a line that is not a component of $B$. Assume that $P \notin \ell$ and that $\ell$ is not tritangent to $B$. Then, $\tilde{C}_\ell \cong C_\ell$ is a bisection of $\pi_P: \mathcal{X}_P \rightarrow \mathbb{P}^1_k$. 

Moreover, let $\hat{C}_\ell \rightarrow \tilde{C}_\ell$ be the normalization of $\tilde{C}_\ell$ and define
$$I:= \{ \ x \in \ell \cap B \ | \ i_x(\ell, B) \text{ is odd } \} \subset \mathbb{P}^2_k.$$
Then the branch locus of the composition $\hat{C}_\ell \rightarrow \tilde{C}_\ell \overset{\pi_P}{\longrightarrow} \mathbb{P}^1_k$ is $\phi_{L_P}(I) \subset \mathbb{P}^1_k$.
\begin{proof}
Since $\ell$ is not contained in $B$ and is not a tritangent line, the pullback $\bar{f}^{-1}(\ell) \subset \bar{X
}$ is geometrically integral, and the same holds for $C_\ell$ and $\tilde{C}_\ell$. Moreover, since $\ell$ does not pass through $P$ the restriction of $\nu_P$ to $C_\ell$ is an isomorphism so $C_\ell \cong \tilde{C}_\ell$. 

The curve $\tilde{C}_\ell$ is not a fiber of $\pi_P$ because $f^{-1}(\ell)\notin H_P$; consequently, $\pi_P(\widetilde{C}_\ell)=\mathbb{P}^1_k$, so $\tilde{C}_\ell$ is a multisection of $\pi_P$. 

We claim that the degree of the restriction
$$\pi_{P|_{\tilde{C}_\ell}}: \tilde{C}_\ell \rightarrow \mathbb{P}^1_k$$ 
is 2. Let $F$ be a smooth fiber of $\pi_P$, that is, $F= \tilde{C}_{\ell'}$ for a line $\ell' \subset \mathbb{P}^2_k$ through $P$ with $|(\ell \cap B)(\bar{k})|=6$. Then, 
$$F \cdot \tilde{C}_{\ell} = C_{\ell'} \cdot C_{\ell}=2,$$
so $\deg (\pi_{P|_{\tilde{C}_\ell}}) =2$. 

Finally, from diagram (\ref{diagramblowup}) the morphism $\hat{C}_\ell \rightarrow \mathbb{P}^1_k$ coincides with the composition: 
\begin{equation} \label{composition}
    \hat{C}_\ell \rightarrow\tilde{C}_{\ell} \overset{\nu_P}{\rightarrow} C_\ell \overset{\mu}{\rightarrow} \bar{f}^{-1}(\ell) \overset{\bar{f}}{\rightarrow} \ell \overset{\phi_{L_P}}{\rightarrow} \mathbb{P}^1_k.
\end{equation}
The composition of the first three maps, $\hat{C}_\ell \rightarrow \bar{f}^{-1}(\ell)$, gives the normalization of $\bar{f}^{-1}(\ell)$, while $f^{-1}(\ell) \rightarrow \ell$ is a finite morphism of degree 2 branched over $\ell \cap B$. By Lemma \ref{lemma: pull back line}, the curve $\bar{f}^{-1}(\ell)$ has an $A_{n-1}$ singularity above a point $x \in \ell \cap B$ for $n:=i_x(\ell, B) \geq 2$. The normalization $\hat{C}_\ell \rightarrow \bar{f}^{-1}(\ell)$ removes a ramification point on an $A_{n-1}$ singularity if and only if $n-1$ is odd, that is, when $n=i_x(\ell , B)$ is even. The last map $\ell \rightarrow \mathbb{P}^1_k$ in (\ref{composition}) is an isomorphism, which completes the proof.

\end{proof}
\end{lemma} 

\begin{remark}
In \cite[Section~4]{XiaoGangGenreDeux}, Gang classifies relatively minimal genus‑$2$ fibrations on complex algebraic surfaces with ``small'' numerical invariants.  
The fibrations $\pi_P\colon\mathcal{X}_P\to\mathbb{P}^1_k$ constructed above fall into case \textit{(5)} of Theorem 4.5 there.
\end{remark}

\subsection{Rank jumps for pencils of hyperelliptic curves of genus 2}
\mbox{}
\vspace{5pt}

We continue with the notation and setup of the previous subsection. Fix a point $P \in \mathbb{P}^2(k)$. We construct saliently ramified multisections for the fibration 
$$\pi_P : \mathcal{X}_P \rightarrow \mathbb{P}^1_k.$$
We present two constructions, namely the pull-back of a tangent line to $B$ (rigid), and the pull-back of a line passing through a singularity of $B$ (yielding a genus 1 fibration).

The first construction is largely inspired by the tangent correspondence from \cite[Definition~3.2]{BogTschRPEF}.

\begin{lemma} \label{lemmatglines}
Assume that $B_{\bar{k}}$ contains an irreducible component $B_0$ that is not a line. Then, for all but finitely many $x \in B_0$, the tangent line $T_x \subset \mathbb{P}^2_{\bar{k}}$ to $B_0$ at $x$ satisfies:
\begin{enumerate}
    \item $|T_x \cap B_{\bar{k}}|=5$, and
    \item there exists $y \in T_x \cap B_{\bar{k}}$ with $y \neq x$ such that $\phi_{L_P}(y) \in (\mathbb{P}^1_{\bar{k}})^0$.
\end{enumerate}
\begin{proof}
The set $\phi_{L_P}^{-1}\big( \mathbb{P}^1_k \setminus(\mathbb{P}^1_{k})^0 \big)$ consists of lines through $P$ that correspond to non-smooth fibers of $\pi_P$. Set 
$$Z := \phi_{L_P}^{-1}\big( \mathbb{P}^1_k \setminus(\mathbb{P}^1_{k})^0 \big) \cap B .$$
Then, if a line through $P$ is contained in $B$, it is also contained in $Z$ since it gives a non-reduced fiber of $\pi_P$. Since $P $ is at most a triple point of $B$, there are at most three such lines (over $\bar{k}$). Hence, we can write $Z_{\bar{k}}=R \cup Z'$, where $R$ is a union of at most three lines, and $Z'$ is a finite union of points. Therefore, only finitely many $x \in B_0$ satisfy the following: 
$$T_x \cap Z' \not= \emptyset, \ \ T_x \subset R \ \text{ or } \ |T_x \cap B_{\bar{k}} | < 5.$$ 
For all remaining $x \in B_0$ the tangent line $T_x$ of $B_0$ at $x$ satisfies properties (1) and (2) of the statement.

\end{proof}
\end{lemma}

In what follows, we show that tangent lines that satisfy the hypothesis of Lemma \ref{lemmatglines} yield saliently ramified bisections of $\pi_P$.

\begin{theorem} \label{theorem: tgmultisection}
Assume that $B$ contains a geometrically irreducible component $B_0$ of degree $d > 1$ defined over $k$. Then, there exists an extension $k'/k$ of degree at most $d$  over which $\pi_P$ admits a saliently ramified bisection of geometric genus 1. 
\begin{proof}
Since $\deg B_0=d>1$, intersection with $k$-lines yield the existence of infinitely many points on $B_0$ defined over a field of degree at most $d$.

By Lemmas \ref{lemma: pull back line} \ref{pblinesmultsecram} and \ref{lemmatglines}, there is a field extension $k'/k$ of degree at most $d$ and a point $x\in B_0(k')$ such that

\[M:=\tilde{C}_{T_x} \subset (\mathcal{X}_P)_{k'},\]
 is a geometrically integral curve of geometric genus 1 which is, moreover, a saliently ramified bisection of $\pi_P$.

\end{proof}
\end{theorem}

Theorem \ref{theorem: tgmultisection} combined with Proposition \ref{infptrkjump} yields:

\begin{corollary} \label{cor: rkjptangent}
Assume that $B$ contains a non-linear, geometrically irreducible component. Then, there exists a field extension $l/k$ of degree at most 12 such that 
$$|\mathcal{R} (\pi_P,l )|=\infty.$$
\begin{proof}
Let $M$ and $k'$ be the multisection and the field extension as in Theorem \ref{theorem: tgmultisection}, respectively. Let $\hat{M} \rightarrow M$ be the normalization of $M$. The composition 
$$\hat{M} \rightarrow M \rightarrow T_x \cong \mathbb{P}^1_{k'}$$ 
realizes $\hat{M}$ as a 2:1 cover of $\mathbb{P}^1_{k'}$ branched over four points, and \cite[Proposition 4.2]{JuliaRatPtK3deg2} guarantees the existence of infinitely many quadratic extensions $l/k'$ with $|\hat{M}(l)|=\infty$. Therefore, $|M(l)|=\infty$ as well. The claim follows from Proposition \ref{infptrkjump}.
\end{proof}
\end{corollary}

\vspace{5pt}

\begin{remark} \label{rem: infptlowdeg}
Clearly, if the geometrically irreducible component $B_0$ of degree $d>1$ contains infinitely many points of degree $d'<d$, the extension $k'/k$ in \ref{theorem: tgmultisection} can be chosen of degree at most $d'$. For example, if $|B_0(k)|=\infty$, we may choose $k'=k$ and hence $|\mathcal{R}(\pi_P,l)|=\infty$ over an extension $l/k$ of degree at most 2. 
\end{remark}

\vspace{5pt}

When $B$ is singular, we apply  the strategy introduced in \cite{SalgadoPasten} and obtain a stronger result for elliptic K3 surfaces. Indeed, in this case, the pullback of the pencil of lines through a singularity of $B$ yields a pencil of genus 1 curves in $\mathcal{X}_P$ whose members are saliently ramified multisections of $\pi_P: \mathcal{X}_P \rightarrow \mathbb{P}^1_k$. We use this family to show that, after a suitable base change $l/k$, the set $\mathcal{R}(\eta,l)$ is not only infinite  but is moreover not thin in $\mathbb{P}^1_l$.
\\

Let $Q \in B_{\mathrm{sing}}(k)$ be a singular point defined over $k$. We first identify an extension $l/k$ over which the pencil of genus-1 curves
$$\pi_Q : X_l \rightarrow \mathbb{P}^1_l$$
contains infinitely many curves, each having infinitely many $l$-points. To achieve this, we find a saliently ramified multisection for $\pi_Q$ and apply Corollary~\ref{cor: g1manyposrankfibers}. One way to obtain such a multisection is by using Theorem \ref{theorem: tgmultisection}. Alternatively, if $X$ admits another genus 1 fibration (for instance, when $B$ has several singular points) we may use that fibration to generate multisections for $\pi_Q$, following the method of \cite{SalgadoPasten}.

\begin{lemma} \label{lemma: twosingularities}
Assume that $B$ admits two singular points, $Q,Q' \in B_{\mathrm{sing}}(k)$, and that at least one of them is a double point. Then, for all but finitely many $s \in \mathbb{P}^1(k)$ the fiber 
\[E_s:=\pi_{Q'}^{-1}(s)\] 
is a saliently ramified multisection of $ \pi_Q: X \rightarrow \mathbb{P}^1_k $.
\begin{proof}
Let $U_Q$ and $U_{Q'}$ be the loci over which $\pi_Q$ and $\pi_{Q'}$ have smooth fibers, respectively. Let $Z$ be the union of the lines through $Q$ not contained in $B$ that 
\begin{itemize}
    \item have a tangent direction to $B$ at $Q$, or
    \item intersect $B$ with multiplicity greater than 1 at some point distinct from $Q$.
\end{itemize}
Then $Z$ is a union of lines not contained in $B$ so the intersection $Z \cap B$ is finite. Set \[V:=U_{Q'} \setminus \phi_{L_{Q'}}(Z \cap B).\]
For each $s \in V(k)$, let $\ell_s=\phi_{L_{Q'}}^{-1}(s)$, and $E_s:=\pi_{Q'}^{-1}(s)=C_{\ell_s}$.

Then, if $Q'$ is a double point, $\ell_s$ meets $B$ in four points away from $Q'$. At most three of them lie on lines through $Q$ contained in $B$, because $Q$ is at most a triple point. 

If $Q$ is a double point, $\ell_s$ meets $B$ in at least three points away from $Q'$. At most two of them lie on lines through $Q$ contained in $B$, because $Q$ is a double point.  

By Lemma \ref{pblinesmultsecram}, $E_s$ is a saliently ramified multisection of $\pi_Q$ in both cases, as claimed.

\end{proof}
\end{lemma}

In what follows, we construct multisections for $\pi_Q: X \rightarrow \mathbb{P}^1_l$. We apply Theorem \ref{theorem: tgmultisection} if $B$ has non-linear irreducible components, and Lemma \ref{lemma: twosingularities} if $B$ is (geometrically) a union of lines.

\begin{proposition} \label{prop: rkjumpelliptic}
Assume that $B$ has a singular point $Q \in B_{\mathrm{sing}}(k)$ and let ${\pi_Q:X \rightarrow \mathbb{P}^1_k}$ be the genus 1 fibration induced by the pencil of lines through $Q$. Then, there is a field extension $l/k$ of degree at most 18 such that the set
\[\big\{ \ t \in \mathbb{P}^1(l) \ \big| \ |\pi_Q^{-1}(t)(l)|=\infty \ \big\} \]
is infinite. 
\begin{proof}
We distinguish cases according to the decomposition of $B_{\bar{k}}$ in irreducible components.
\vspace{5pt}

\textbf{Case 1: $B_{\bar{k}}$ is a union of six lines.}
Since $Q$ is a simple singularity, it lies on at most three of these lines. We want to find a suitable extension of $k$ over which a second intersection point of the six lines is defined and then apply Lemma~\ref{lemma: twosingularities} to construct a saliently ramified multisection for $\pi_Q$. We distinguish two subcases depending on the number of lines through $Q$:

\vspace{3pt}
\noindent
\textit{Case 1.1: $Q$ is a intersection of exactly two lines.}
Let $\ell_1$ and $\ell_2$ be the two lines that meet in $Q$. Then there is an  extension $k_0/k$ of degree at most 2 over which they are defined. Over $k_0$ we can write 
$$B_{k_0}=\ell_1 \cup \ell_2 \cup R$$ 
where $R$ is a geometrically reducible quartic with four linear components over $\bar{k}$. Therefore, there is an extension $k'/k_0$ of degree at most four for which there is a point
$$Q' \in (\ell_1 \cap R)(k').$$

\vspace{3pt}
\noindent
\textit{Case 1.2: $Q$ is a intersection of three lines.}
Let $\ell \subset \mathbb{P}^2_{\bar{k}}$ be one of these three lines. Then $\ell$ is defined over an extension $k_0/k$ of degree at most 3. Let $\ell'$ be a linear component of $B_{\bar{k}}$ that does not pass through $Q$. We can choose $\ell'$ such that $\ell '\cap \ell$ is a double point $Q'$ of $B$ (i.e., no other component of $B$ passes through $Q'$). Moreover, $\mathrm{Gal}(\bar{k}/k)$ permutes the components of $B$ that do not pass through $Q$. Therefore, there is an extension $k_1/k$ of degree at most 3 over which $\ell'$ is defined. Let $k':=k_0\cdot k_1$ be the compositum of $k_0$ and $k_1$, then $Q'$ is defined over $k'$. 
\\
\\
Let $Q$ be as in \textit{Case 1.1 or 1.2.} Then, by Lemma~\ref{lemma: twosingularities}, there is a smooth genus 1 curve $E \subset X_{k'}$ defined over $k'$ that is a saliently ramified multisection of $\pi_Q$. By \cite[Proposition 4.2]{JuliaRatPtK3deg2}, there is a further quadratic extension $l/k'$ with $|E(l)|=\infty$, and Corollary \ref{cor: g1manyposrankfibers} gives the result.

%We can choose an extension $k'/k$ of degree 9 such that $\ell, \ \ell'$ and $Q'$ are defined, and apply Lemma~\ref{lemma: twosingularities} to obtain a smooth genus 1 curve $E \subset X_{k'}$ that is a saliently ramified multisection of $\pi_Q$. Again, \cite[Proposition~4.2]{JuliaRatPtK3deg2} provides a quadratic extension $l/k'$ such that $|E(l)|=\infty$, and Corollary \ref{cor: g1manyposrankfibers} gives the result.

\vspace{5pt}
\textbf{Case 2: $B_{\bar{k}}$ is not a union of six lines.}
Let $B_0$ be a component of  $B_{\bar{k}}$ of degree $d>1$. Then, $B_0$ is defined over an extension $k_0/k$ of degree $[k_0:k]\leq \frac{6}{d}$. By Theorem \ref{theorem: tgmultisection} there is a further extension $k'/k_0$ of degree at most $d$ together with a saliently ramified multisection $M \subset X_{k'}$ of geometric genus 1. By \cite[Proposition 4.2]{JuliaRatPtK3deg2}  there is a quadratic extension $l/k'$ with $|M(l)|=\infty$. A direct application of Corollary \ref{cor: g1manyposrankfibers} completes the proof.

\end{proof}
\end{proposition}

Finally, we show that over the field extension $l/k$ provided by Proposition \ref{prop: rkjumpelliptic}, the set $\mathcal{R} ( \pi_P,l)$ is not thin in $\mathbb{P}^1_l$.

\begin{theorem} \label{theorem: nonthinrkjumpgenus2}
Assume that $B$ has a  $k$-rational singular point $Q \in B_{\mathrm{sing}}(k)$ and that $P \notin B_{\mathrm{sing}}(k)$. Then, there exists a field extension $l/k$ of degree at most 18 such that the set $\mathcal{R}(\pi_P,l) \subset \mathbb{P}^1_l$ is not thin.
\begin{proof}
Let $\pi_Q: X \rightarrow \mathbb{P}^1_k$ be the genus 1 fibration induced by the pencil of lines through $Q$, and let $U_P$, $U_Q$ denote the loci of smooth fibers of $\pi_P$, $\pi_Q$, respectively. 

By Proposition \ref{prop: rkjumpelliptic}, there is a field extension $l/k$ of degree at most 18 such that $\pi_Q$ has infinitely many fibers over $l$ with infinitely many $l$-points. We base change to $l$ and drop the subscript $l$ to simplify the notation.

\vspace{3pt}
In what follows, we show that $\mathcal{R}(\pi_P, l)$ is not thin in $\mathbb{P}^1_l$. We follow the strategy of \cite[Theorem 1.1]{SalgadoPasten}.

 Let $\varphi_i: Y_i \rightarrow \mathbb{P}^1_l$ ($i=1, \ldots,m$) be morphisms of smooth geometrically irreducible curves of degree $\deg \varphi_i \geq 2$ and $|Y_i(l)|=\infty$. The following diagram illustrates the situation:
 \[\begin{tikzcd}
	& {\mathcal{X}_P} && \\
	{\mathbb{P}^1_l} & X & {\mathbb{P}^1_l} & {Y_i} \\
	& {\mathbb{P}^2_l}
	\arrow["{\nu_P}"', from=1-2, to=2-2]
	\arrow["{\pi_P}", from=1-2, to=2-3]
	\arrow["{\pi_Q}"', from=2-2, to=2-1]
	\arrow["{\phi_{H_P}}", dashed, from=2-2, to=2-3]
	\arrow["f"{pos=0.3}, from=2-2, to=3-2]
	\arrow["{\varphi_i}"', from=2-4, to=2-3]
	\arrow["{\phi_{L_Q}}", dashed, from=3-2, to=2-1]
	\arrow["{\phi_{L_P}}"', dashed, from=3-2, to=2-3]
\end{tikzcd}\]
with notation as introduced in \eqref{diagramblowup}.
 
 Denote by  $\Sigma \subset \mathbb{P}^1_l$ the union of all branch points of the $\varphi_i$. Recall that for $s \in \mathbb{P}^1_l$, the preimage $\ell_s:=\phi_{L_P}^{-1}(s)$ is a line through $P$ defined over $l$. Set:
\[ \Omega :=\{ \ s \in \mathbb{P}^1_l(l) \ | \ \ell_s \not\subset B_l \text{ and } \exists x \in \ell_s \cap B_l \text{ with }i_x(\ell_s, B_l)>1 \ \}\]
and define
\[Z:= \bigcup_{\substack{s \in \Sigma(l) \cup \Omega\\ \ell_s \not \subset B}} \ell_s  \subset \mathbb{P}^2_l.\]
Since $Z$ is a finite union of lines not contained in $B_l$, the intersection $Z\cap B_l$ is finite. Now set
\[V:=U_Q \setminus \phi_{L_Q}(Z \cap B_l) \subset \mathbb{P}^1_l.\]
By Lemma \ref{pblinesmultsecram}, for every $t \in V(l)$, the curve $E_t:=\tilde{C}_{\phi_{L_Q}^{-1}(t)} \cong \pi_Q^{-1}(t)$ is a saliently ramified bisection of $\pi_P: \mathcal{X}_P \rightarrow \mathbb{P}^1_l$. Moreover, for each $i$, there is at most one line through $P$ contained in $B$, so the branch loci of $\pi_{P|_{E_t}}$ and $\varphi_i$ share at most one point. Moreover, infinitely many $t \in V(l)$ satisfy $|E_t(l)|=\infty$. Fix one such $t_0$ and set $E:=E_{t_0}$.

Let $l(\mathbb{P}^1_l)$, $l(E)$ and $l(Y_i)$ denote the function fields of $\mathbb{P}^1_l$, $E$ and $Y_i$, respectively. Because $[l(E):l(\mathbb{P}^1_l)]=2$, for each $i$ either $l(E)\cap l(Y_i)=l(\mathbb{P}^1_l)$ or $l(E)\subset l(Y_i)$.   The latter would force $\varphi_i$ to factor through $\pi_P|_E$, which is impossible because the branch locus of $\pi_P|_E$ is not contained in the branch locus of $\varphi_i$.  
Hence $l(E)/l(\mathbb{P}^1_l)$ and each $l(Y_i)/l(\mathbb{P}^1_l)$ are linearly disjoint and the fiber product $E \times_{\mathbb{P}^1_l} Y_i$ is geometrically integral for every $i=1, \ldots , m$. Let 
$D_i \rightarrow E \times_{\mathbb{P}^1_l} Y_i$ 
be the normalization. We have the following diagram:
\[\begin{tikzcd}
	{D_i} & {E \times_{\mathbb{P}^1_l} Y_i} & {Y_i} \\
	& E & {\mathbb{P}^1_l}
	\arrow[from=1-1, to=1-2]
	\arrow[from=1-2, to=1-3]
	\arrow[from=1-2, to=2-2]
	\arrow["{\varphi_i}", from=1-3, to=2-3]
	\arrow["{\pi_{P|_E}}"', from=2-2, to=2-3]
\end{tikzcd}\]
Since ${\pi_P}|_E$ and $\varphi_i$ share at most one branch point, the composition $D_i \rightarrow E$ is ramified, and therefore $D_i$ has genus at least 2. By Falting's theorem \cite{Faltings1983}[Satz 7], each $D_i(l)$ is finite, whereas $E(l)$ is infinite, so 
\[E(l) \setminus \bigcup_{i=1}^n\psi_{i}(D_i(l))\] 
is infinite. 

Finally, by Proposition \ref{infptrkjump}, for all but finitely many $x \in E(l)$ we have $\pi_P(x) \in \mathcal{R}(\pi_P,l)$. Therefore,
\[\mathcal{R}(\pi_P,l) \setminus \bigcup_{i=1}^n \varphi_i(Y_i(l))\]
is infinite, and hence $\mathcal{R}(\pi_P,l)$ is not thin.

\end{proof}
\end{theorem}

\begin{remark} \label{rem: choiceoffield1}
In specific situations the degree bounds can often be improved.
\begin{itemize}

\item[a)] The field extension $l/k$ is chosen so that the genus 1 fibration $\pi_Q$ has infinitely many fibers over $l$ with infinitely many $l$-rational points. Sometimes this is achieved already over the base field $k$. For example, let $\mathcal{L}_d$ be the family of $U \oplus \langle-2d\rangle$-polarized K3 surfaces studied in \cite{GarbagnatiSalgado}. If $d>3$ and $d \equiv 3 $ (mod 4) this is true for a generic member in $\mathcal{L}_d$, see \cite[Proposition~3.16~and~Theorem~5.3]{GarbagnatiSalgado}. This gives Corollary \ref{cor3:intro}.

\item[b)] If $B$ has an irreducible component $B_0$ of degree $\deg B_0 >1 $ with infinitely many $k$-rational points, we can always choose an extension $l/k$ of degree at most 2 (see Remark \ref{rem: infptlowdeg}).
\end{itemize}
\end{remark}

\vspace{7pt}
\begin{remark}
If $B$ is smooth, the surface $X$ might also admit a genus 1 fibration $\epsilon:  X \rightarrow \mathbb{P}^1_k$ (e.g. \cite[Example~5.8]{VanGeemenBrauerGroupsK3}) and we obtain a result analogous to Theorem \ref{theorem: nonthinrkjumpgenus2}.  For example, in the situation described in \cite[Example~5.8]{VanGeemenBrauerGroupsK3}, we obtain a non-thin rank jump over an extension $l/k$ such that $[l:k]\leq 6$.  Indeed, the cover involution yields a second genus 1 fibration $\bar{\epsilon}:X \rightarrow \mathbb{P}^1_k$ whose fibers are, all but finitely many, saliently ramified smooth multisections of degree 9 for $\epsilon$. Fix a smooth $\bar{\epsilon}^{-1}(t_0)$.  There is an extension $l/k$ of degree at most 3 over which $\bar{\epsilon}^{-1}(t_0)$ has a point and, after a further extension of degree at most 2, it has positive Mordell--Weil rank. Corollary \ref{cor: g1manyposrankfibers} yields the desired result.  %We apply the same argument as in the proof of Theorem \ref{theorem: nonthinrkjumpgenus2} to obtain non-think rank jumps for $\pi_P$ over $l$.  
We observe that  \cite[Theorem~1.1]{MezzedimiGvirtzchenNonThinRatPtsK3} and \cite[Theorem~1.1]{ColliotTheleneRankJumps} can be applied to show the non-thin rank jump over a finite field extension $l/k$ since the K3 above is doubly elliptic. However, this would not yield a bound on the degree $[l:k]$.

%there exists a field extension $l/k$ such that $X(l)$ is not thin, and \cite[Theorem~1.1]{ColliotTheleneRankJumps} implies that $\mathcal{R} \big( \pi_P,l \big)$ is not thin for every $P \in \mathbb{P}^2(l)$. However, \cite[Theorem~1.1]{MezzedimiGvirtzchenNonThinRatPtsK3} does not provide an effective bound for the degree of $l/k$.
%
%Alternatively, one can try to produce multisections for $\pi$ using tangent lines to $B$ as in Theorem~\ref{theorem: tgmultisection}, and then employ $\pi$ as a varying family of multisections for $\pi_P$.
%{\textcolor{red}{this second paragraph above is not good. Alternatively to what? it is not an alternative as the field is semi-explicit in this situation. Also "try to" is not okay. and what do you mean with "employ". it's vague. Alternative below, but I suggest we delete this part and all the remark below.}} 

%More generally, for a smooth branch sextic with a genus 1 fibration, our methods yield the bound $[l:k]\leq 6$ in this situation. Indeed, the tangent lines to $B$ yield saliently ramified bisections for the genus 1 fibration defined over an extension of degree at most 6 ({\textcolor{red}{check: 6 or 12.}}), analogously to the situation in Theorem \ref{theorem: tgmultisection}. Hence infinitely many genus 1 fibers have positive rank over $k'$. The remaining argument is as in the proof of Theorem \ref{theorem: nonthinrkjumpgenus2}.
\end{remark}

\subsection{Rank jumps in the complete linear system}
\mbox{}
\vspace{5pt}

In what follows, we study Mordell–Weil rank jumps in the complete linear system $H = |f^*\mathcal{O}_{\mathbb{P}^2_k}(1)|$. Consider the universal family 
$$\pi:\mathcal{C} \rightarrow H \cong \mathbb{P}^2_k,$$
where $\mathcal{C}$ is the incidence variety of the curves in $H$. Let $C$ denote the generic fiber of $\pi$, which is a hyperelliptic curve of genus 2 over $k(H) \cong k(\mathbb{P}^2_k)=k(t,s)$, and let $J=\mathrm{Jac}(C)$ be its jacobian over $k(s,t)$. For every finite extension $l/k$, the group $J(l(t,s))$ is finitely generated by the Lang-Néron theorem \cite{FundamentalsLang}[Theorem 6.1]. Let $r(\pi,l)$ denote its rank. Let $H^0 \subset H$ be the smooth locus of $\pi$. In the same spirit as for pencils in $H$, we seek a finite extension $l/k$ over which the set
$$\mathcal{R}(\pi,l)= \{ \ D \in H^0(l) \ | \ \mathrm{rank(}\mathrm{Jac}(D)(l)) > r(\pi,l) \ \}$$
is dense in $H_l$, or better, not thin in $H_l$.\\

Each point $P \in \mathbb{P}^2(k)$ determines a pencil of curves $H_P$ in $H$, with notation as in \ref{subsection: pblines}. If $P \notin B_{\mathrm{sing}}$, via the isomorphism $H_P \cong \mathbb{P}^1_k$, the smooth locus $\pi_P^0 : \mathcal{X}^0_P \rightarrow (\mathbb{P}^1_k)^0$ of the fibration $\pi_P$ constructed in subsection \ref{subsection: pblines} can be identified with the pullback $\pi^{-1}(H_P^0) \rightarrow H_P^0$, where $H_P^0 := H_P \cap H^0$. 

We first compare the ranks $r(\pi,k)$ and $r(\pi_P,k)$. By Wazir's generalization of Silverman's specialization theorem to higher-dimensional families \cite{WazirSpecialization}, for all but finitely many $P \in \mathbb{P}^2(k)$,
\[r(\pi_P,k) \geq r(\pi,k).\]
In particular, with a slight abuse of notation (identifying $H_P \cong \mathbb{P}^1_k$),
$$\mathcal{R}(\pi_P,k) \subset \mathcal{R}(\pi,k)$$
for all but finitely many $P \in \mathbb{P}^2(k)$. 

\begin{corollary} \label{cor: rkjmpcompletesystem}
With the notation above,
\begin{enumerate}
    \item If $B$ contains a $k$-point $x \in B(k)$ that is not in a line tritangent to $B$, then there is a field extension $l/k$ of degree at most 2 such that $\mathcal{R}(\pi,l)$ is dense in $H_l \cong \mathbb{P}^2_l$.
    \item If $B$ has a singular point $Q \in B_{\mathrm{sing}}(k)$, then there exists a field extension $l/k$ of degree at most 18 such that $\mathcal{R}(\pi,l)$ is not thin in $H_l \cong \mathbb{P}^2_l$ .
\end{enumerate}
\begin{proof}
Let us first show (1). The pullback $M=f^{-1}(T_x) \subset X$ is an irreducible  singular curve of geometric genus at most 1, and its normalization $E=\hat{M}$ is a genus 1 curve or a rational curve, see Lemma \ref{lemma: pull back line}. By Proposition~4.2 of \cite{JuliaRatPtK3deg2} there is a quadratic extension $l/k$ with $|E(l)|=\infty$, and hence $|M(l)|=\infty$ as well. We base‑change to $l$ and drop the subscript $l$ to simplify the notation.

Let $M$ be as above. There exists open set $U \subset \mathbb{P}^2_l$ such that for all $P \in U(l)$, the strict transform of $M$ by the blowup $\nu_P : \mathcal{X}_P \rightarrow X$ is a saliently ramified bisection of the fibration 
$\pi_P$. 
By Proposition \ref{infptrkjump},
$$|\mathcal{R}(\pi_P,l)|=\infty \text{    for all } P \in U(l).$$

Assume, for contradiction, that $\mathcal{R}(\pi,l)$ is not dense in $H \cong \mathbb{P}^2_l$, that is, there exist finitely many curves $C_1, \ldots , C_n \subset H_l$ over $l$ such that 
$$\mathcal{R}(\pi,l) \subset C_1(l) \cup \cdots C_n(l).$$ 
Then, for all but finitely many $P \in \mathbb{P}^2(l)$, the intersection $\mathcal{R}(\pi,l) \cap H_P$ is finite. However, for all but finitely many $P \in U(l)$, we have $\mathcal{R}(\pi_P,l) \subset \mathcal{R}(\pi,l) \cap H_P$ (via the isomorphism $H_P \cong \mathbb{P}^1_{l}$) and $\mathcal{R}(\pi_P,l)$ is infinite, which contradicts our assumption.\\

We verify (2). Let $l/k$ be the field provided by Theorem \ref{theorem: nonthinrkjumpgenus2}. Again we base change to $l$ and drop the subscript $l$. Assume, for contradiction, that $\mathcal{R}(\pi,l) \subset H \cong \mathbb{P}^2_l$ is thin. Then, there is a dense, open subset $U \subset \mathbb{P}^2_l$ such that for every $P \in U(l)$ the intersection $H_P \cap \mathcal{R}(\pi,l)$ is thin, see \cite[p.~127]{SerreLecturesMW}. However, for all but finitely many $P\in U(l)$ we have $\mathcal{R}(\pi_P,l) \subset \mathcal{R}(\pi,l)\cap H_P$, and $\mathcal{R}(\pi_P,l)$ is not thin by Theorem~\ref{theorem: nonthinrkjumpgenus2}, which clearly contradicts our assumption.
\end{proof}
\end{corollary}

\begin{remark}
Since there are infinitely many points in $B(\bar{k})$ of degree at most 6, we can always find an extension $k'/k$ of degree at most six and a point $x \in B(k')$ satisfying the hypotheses of Corollary \ref{cor: rkjmpcompletesystem} (1).

The field extension in part (2) is provided by Theorem \ref{theorem: nonthinrkjumpgenus2}. As noted in Remark \ref{rem: choiceoffield1} a)  the degree bound can sometimes be lowered, for example when $B$ contains a non‑linear geometrically irreducible component with infinitely many $k$-rational points.\\
\end{remark}

A very general K3 surface of degree 2 defined over $\mathbb{Q}$ satisfies the hypotheses of Corollary \ref{cor: rkjmpcompletesystem}[(1)].

\begin{proof}[Proof Corollary \ref{cor:4 intro}]
In \cite{JuliaRatPtK3deg2}[Theorem~1.3], Martínez-Marín constructs a family of sextics $T$ defined over $\mathbb{Q}$ with the following properties: 
\begin{itemize}
    \item[i)] For every $B \in T$ the double cover $f_B: X_B \rightarrow \mathbb{P}^2_{\mathbb{Q}}$ branched over $B$ is a degree two K3 surface.
    \item[ii)] Let $\mathcal{M}_2$ be the coarse moduli space of K3 surfaces of degree 2 and let $\psi:T \rightarrow \mathcal{M}_2$ be the projection taking each $B \in T$ to the isomorphism class of $X_B$. The image $\psi(T)$ is dominant. 
    \item[iii)] For every $B \in T$, we find $x=[1:0:0] \in B(\mathbb{Q})$ and the tangent line $\ell$ of $B$ at $x$ satisfies $|(\ell \cap B)(\bar{\mathbb{Q}})|=5$. Furthermore, $M=f^{-1}(\ell)$ is a nodal genus 1 curve with infinitely many $\mathbb{Q}$-points.
\end{itemize}
The claim follows from Corollary \ref{cor: rkjmpcompletesystem}. Notice that we can choose $l=\mathbb{Q}$ because $|M(\mathbb{Q})|=\infty$.

\end{proof}

\section{Examples}\label{sec:examples} 
In what follows, we present examples of K3 that satisfy the hypothesis of at least one of our results and that are not known to have the potential Hilbert property. Moreover, the underlying abelian fibration given by the jacobians of the genus 2 pencil is not known to satisfy the hypotheses in Th\'eor\`eme 3.3 in \cite{ColliotTheleneRankJumps}. Therefore, a priori, the results of \cite{ColliotTheleneRankJumps} cannot be applied to them.

For completeness, we include a brief background on the Shioda-Tate formula, which is used repeatedly in our examples.
\vspace{5pt}

\subsection{The Shioda-Tate formula for higher genus fibrations.} \label{subsectionshioda}
\mbox{}
\vspace{5pt}

Let $k$ be a number field and $\pi: \mathcal{X} \rightarrow S$ a fibered surface over $k$, whose generic fiber $X:=\mathcal{X}_{\xi}$ is a smooth, geometrically connected genus $g$ curve. Let $J=\mathrm{Jac}(X)$ be the Jacobian of $X$ over the function field $k(S)$. Let us assume that $\pi$ has a section over $\bar{k}$ that is, $O\in X(\bar{k}(S)) \neq \emptyset$.\\

In what follows, we introduce the main tool to compute the rank of $J(\bar{k}(S))$: the Shioda-Tate formula (cf. \cite{ShiodaHG}). Let $N \subset \mathrm{NS}(\mathcal{X}_{\bar{k}})$ be the subgroup of the (geometric) N\'eron-Severi group generated by:
\begin{enumerate}
    \item the class of the section $\bar{O} \subset \mathcal{X}_{\bar{k}}$,
    \item the class of a fiber $F$ of $\pi$,
    \item components of the reducible fibers of $\pi$ (over $\bar{k}$) that do not intersect $\bar{O}$.
\end{enumerate}

\begin{theorem}[Shioda-Tate] \label{theorem: shioda formula}
Let $(T, \tau)$ be the $\bar{k}(S)/\bar{k}$-trace of $J$. There is a natural isomorphism
\begin{equation} \label{ShiodaIso}
    J(\bar{k}(S))/\tau T(\bar{k}) \cong \mathrm{NS}(\mathcal{X}_{\bar{k}})/N.
\end{equation}
In particular, if $T$ is trivial, the following formula holds:
\begin{equation} \label{ShiodaFormula}
    \operatorname{rank} \ J(\bar{k}(S)) = \rho(\mathcal{X}_{\bar{k}})-2- \sum_{s \in S(\bar{k})} (m_s -1),
\end{equation}
where $m_s$ is the number of components of the fiber $\pi^{-1}(s)$ that do not intersect $\bar{O}$ and $\rho(\mathcal{X}_{\bar{k}})$ is the rank of the N\'eron-Severi group of $\mathcal{X}_{\bar{k}}$.
\end{theorem}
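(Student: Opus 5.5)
This is the Shioda--Tate formula, which the paper attributes to Shioda. I would prove it through the theorem of the base / Lang--Néron framework for the Néron--Severi group of the fibered surface $\mathcal{X}_{\bar{k}}$. Set $K=\bar{k}(S)$. The plan has three steps:
\begin{enumerate}
\item Construct a surjective homomorphism from $\mathrm{NS}(\mathcal{X}_{\bar{k}})$ onto $J(K)/\tau T(\bar{k})$.
\item Identify its kernel with $N$.
\item Compute ranks to obtain (\ref{ShiodaFormula}).
\end{enumerate}

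For the map, take a divisor $D$ on $\mathcal{X}_{\bar{k}}$ and restrict it to the generic fiber $X_K$. This gives a divisor $D|_X$ of some degree $d$. Send $D$ to the class of $D|_X - d\,O$ in $\mathrm{Pic}^0(X_K)=J(K)$; here we use that $X(K)\neq\emptyset$, so $\mathrm{Pic}^0_{X/K}(K)=\mathrm{Pic}^0(X_K)$.
\begin{itemize}
\item The map is well defined on $\mathrm{Pic}(\mathcal{X}_{\bar{k}})$, and it is surjective. Any divisor on $X_K$ spreads out to its Zariski closure in $\mathcal{X}_{\bar{k}}$; this is the correspondence (\ref{msecptcorresp}) for multisections.
\item To descend to $\mathrm{NS}$, note that $\mathrm{Pic}^0(\mathcal{X}_{\bar{k}})$ is an abelian variety over $\bar{k}$. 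Its image in $J(K)$ is the set of $\bar{k}$-points of a constant abelian subvariety, so by the universal property of the $K/\bar{k}$-trace it lands in $\tau T(\bar{k})$. Conversely, $\tau T(\bar{k})$ comes from $\mathrm{Pic}^0(\mathcal{X})$: via the Albanese/Picard functoriality, $\mathrm{Pic}^0_{\mathcal{X}}\to$ trace is an isogeny-onto (in fact surjective) map.
\end{itemize}
This produces the surjection $\mathrm{NS}(\mathcal{X}_{\bar{k}})\to J(K)/\tau T(\bar{k})$.

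For the kernel, clearly $\bar{O}$, $F$ and the fiber components not meeting $\bar{O}$ all map to $0$: the latter two restrict trivially to $X_K$, and $\bar O$ restricts to $O$. For the reverse inclusion, suppose $D|_X \sim d\,O$ modulo the trace part. Subtracting $d\bar{O}$ and a divisor in $\mathrm{Pic}^0(\mathcal{X})$, we may assume $D|_X\sim 0$ on $X_K$. Then $D=\mathrm{div}(h)+V$ with $V$ vertical, that is, supported on fibers. Each vertical divisor is a combination of full fibers (all numerically equivalent to $F$) and fiber components. Finally, the component of each fiber that meets $\bar{O}$ can be expressed as $F$ minus the other components. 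Hence the kernel is exactly $N$, which gives (\ref{ShiodaIso}).

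The step I expect to be the main obstacle is the trace compatibility in the first step: showing that the image of $\mathrm{Pic}^0(\mathcal{X}_{\bar{k}})$ is exactly $\tau T(\bar{k})$, so that quotienting by it on both sides is consistent. I would handle it by citing the standard identification of the trace with (an isogenous image of) $\mathrm{Pic}^0_{\mathcal{X}}$, which is how Shioda proceeds.

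For the formula, assume $T=0$. Then $J(K)$ is finitely generated, and its rank equals $\rho(\mathcal{X}_{\bar{k}})-\operatorname{rank}N$. It remains to show that the listed generators of $N$ are linearly independent, so that $\operatorname{rank}N=2+\sum_s(m_s-1)$. I would do this with the intersection form:
\begin{itemize}
\item $\bar{O}$ and $F$ span a hyperbolic plane, since $F^2=0$ and $\bar{O}\cdot F=1$.
\item By Zariski's lemma, the components of each reducible fiber not meeting $\bar{O}$ span a negative definite lattice, and these lattices are orthogonal to $F$, to $\bar O$, and to each other.
\end{itemize}
Together these give a nondegenerate Gram matrix, hence the independence. (In (\ref{ShiodaFormula}), $m_s$ should be read as the total number of components of the fiber; the quantity summed is the number of components not meeting $\bar O$, which is $m_s-1$.)
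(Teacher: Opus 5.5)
Your proposal is correct and is essentially Shioda's standard argument: restrict divisors to the generic fibre, identify the image of $\mathrm{Pic}^0(\mathcal{X}_{\bar k})$ with $\tau T(\bar k)$, compute the kernel via vertical divisors, and get independence of the generators of $N$ from Zariski's lemma; the paper gives no proof of its own and simply cites Shioda, and your reading of $m_s$ as the total number of components of $\pi^{-1}(s)$ is the right one, matching how the formula is used in the examples. Two minor precisions: in the kernel step you need the fibres to be algebraically (not just numerically) equivalent to $F$, which holds because they are parametrized by the connected curve $S$; and the surjectivity of $\mathrm{Pic}^0(\mathcal{X}_{\bar k})\to\tau T(\bar k)$ that you cite can be checked directly by spreading the line bundle on $T_K\times_K X$ defining $\tau$ out to $T\times_{\bar k}\mathcal{X}_{\bar k}$, since its restrictions to the slices $\{t\}\times\mathcal{X}_{\bar k}$, $t\in T(\bar k)$, are algebraically equivalent to the slice at $t=0$ ($T$ being connected), so their differences with it lie in $\mathrm{Pic}^0(\mathcal{X}_{\bar k})$ and restrict to $\tau(t)$.
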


\begin{remark}
\mbox{}

\begin{enumerate}
    \item By \cite[Theorem 3]{ShiodaHG}, the triviality of the trace of $J$ is equivalent to the irregularity of $\mathcal{X}$ being equal to the genus of $S$.
    \item The isomorphism in (\ref{ShiodaIso}) depends on the choice of the base point $O \in X(\bar{k}(S))$.\\
\end{enumerate}
\end{remark}

\subsection{Examples}
\mbox{}
\vspace{5pt}

The K3 surfaces that we consider in Examples~\ref{ex:1} and \ref{ex:2} have Picard number at least 2. Indeed, in both cases the branch sextic admits a tritangent line that yields a section for the fibration.  Since the base of the fibration is rational, any section is a rational curve and thus yields an independent class in the Picard lattice. 

Let $L:=\big(\begin{smallmatrix}2&1\\1&-2 \end{smallmatrix}\big)$. By the discussion above, the K3 surfaces in Examples~\ref{ex:1} and \ref{ex:2} satisfy $L\subseteq \mathrm{NS}(X)$.

Our final example takes a different turn and considers a surface with Picard number 1, which fits the setting of Corollary \ref{cor3:intro}.

\begin{example}\label{ex:1}
We consider an $L$-polarized  K3 surface of degree 2 with $\rho= 2$.

Let $B$ be the smooth plane sextic with equation 

\[
B: x^5z+2x^4y^2+8x^4    z^2+x^3y^3+x^3yz^2+x^3z^3+x^2y^4+5x^2y^2z^2
\]
\[+7x^2z^4+xy^4z+xy^3z^2+6xz^5+y^5x+16y^4z^2=0.
\]
Let $X$ be the double cover of the plane branched along $B$. Then $X$ is a K3 surface with $\mathrm{NS}(X)=L$. Indeed, the class of a general line $\ell$ in $\mathbb{P}^2_{\mathbb{Q}}$ pulls back to a class $h$ in $X$ with $h^2=2$. We exhibit a class $m$ with $m^2=-2$, i.e., a rational curve, such that $m\cdot h =1$. The sextic $B$ admits a tritangent line, namely $x=0$. This line splits in the double cover in two classes of rational curves, $m$ and $m'$ in $\mathrm{NS}(X)$, such that $m\cdot h=m'\cdot h=1$ so $L \subseteq \mathrm{NS}(X)$ and, in particular, $\rho\geq 2$. In what follows, we show that equality holds, i.e., $\rho=2$. To conclude that $\mathrm{NS}(X)=L$, observe that $|\det L|=5$, and, in particular, it is square free, thus $\mathrm{NS}(X)$ is not an overlattice of $L$. 

We apply Corollary 2.3 in \cite{vanLuijkPic1}. More precisely, we show that for a prime of good reduction, the number of eigenvalues of the characteristic polynomial of Frobenius that are roots of unity is precisely 2. By Corollary 2.3 in \cite{vanLuijkPic1}, this gives $\rho\leq 2$.

All primes up to 100 are of good reduction, and, $p=19$ is the smallest prime that satisfies what we claimed above.
Indeed, in that case, the characteristic polynomial of Frobenius has the following irreducible factor of degree 20:
\[
\Phi_{20}(T)=T^{20} - 21T^{19} + 19T^{18} + 5776T^{17} - 164616T^{16} + 
        2345778T^{15} + 4952198T^{14} \]
        \[
        - 893871739T^{13} + 7150973912T^{12}  + 118884941287T^{11} -  2904189280011T^{10} + \]
         \[42917463804607T^9 + 
        931922071185752T^8 -42052983462257059T^7 + 
         84105966924514118T^6 + \]      \[14382120344091914178T^5-    364347048716995159176T^4+  4615062617081938682896T^3 + \]
         \[5480386857784802185939T^2 -2186674356256136072189661T+ 37589973457545958193355601.\]
        
The remaining factor corresponds to the two roots defined over the ground field, which stem from the classes of $h$ and $m$ described above, confirming the equality $\rho=2$.

In particular, $X$ does not admit a genus 1 fibration. Indeed, if there were a genus 1 fibration, there would exist a nef class $u$ with $u^2=0$, and $u\cdot m\geq 0$ in $\mathrm{NS}(X)$. We write $u=\alpha l+\beta m$, then $u^2=0$ implies that \begin{equation}\label{eq:1} 
2\alpha^2+2\alpha\beta -2\beta^2=0\
\end{equation}
Therefore $\beta\neq 0$ and $\alpha(\alpha+\beta) =\beta^2$. On the other hand, $u\cdot m\geq 0$, implies that 
\begin{equation}\label{eq:2}
\alpha -2\beta\geq 0, \text{ so } \alpha\geq 2\beta.
\end{equation}
Equations \ref{eq:1} and \ref{eq:2} yield 
\[
10\beta^2\leq 0,
\]
and hence $\beta=0$, which is not possible.  Therefore, there is no genus 1 fibration on $X$.

Let $P=(0:1:1) \notin B(\mathbb{Q})$ and let $\pi_P: \mathcal{X}_P \rightarrow \mathbb{P}^1_{\mathbb{Q}}$ be the genus 2 fibration induced by the pull-back of the pencil of lines through $P$, after two consecutive blow ups. Then $\pi_P$ admits a singular fiber with reducible components $m$ and $m'$, and two sections, namely the exceptional divisors above $(0:1:1:\pm 4)$. 

By the construction of $\mathcal{X}_P$, the group $\mathrm{NS}(\mathcal{X}_P)$ has rank 4. Since $x=0$ is the unique tritangent line to $B$ and $B$ is smooth, the unique reducible fiber of $\pi_P$ is $m+m'$. 

By the Shioda-Tate formula (\ref{ShiodaFormula}), the rank of the generic jacobian is $\operatorname{rank}(J(K))=1$, for $K=\mathbb{Q}(\mathbb{P}^1)$. 

We exhibit a singular saliently ramified multisection of genus 1 for $\pi$ defined over $\mathbb{Q}$. This puts in in position to apply Prop. \ref{infptrkjump} to conclude that there are infinitely many fibers of $\pi$ whose jacobian has rank at least 2, over $\mathbb{Q}$. 

The line $z=0$ is tangent to $B$ at the point $R=(1:0:0)$. On $X$ this corresponds to
\[
M:w^2=y^2x(2x^3+x^2y+xy^2+y^3),
\]

The normalization is an elliptic curve given by
\[E:w^2=x(2x^3+x^2y+xy^2+y^3)\]
Its Weierstrass form is 
\[ y^2 = x^3 + 864x + 81216,\] which has rank 1 over $\mathbb{Q}$.

We claim that it is saliently ramified. Indeed, the lines through $P$ and the intersection points of $\{z=0\}$ and $B$ different from $R$ correspond to smooth fibers of $\pi_P$, since each of them intersects $B$ in 6 distinct points (over $\bar{\mathbb{Q}}$). Thus, $M$ is a saliently ramified multisection of $\pi_P$ with $|M(\mathbb{Q})|=\infty$. We conclude that there are infinitely many $a \in \mathbb{Q}$ such that the jacobian of the hyperelliptic curve given by $w^2=F(x,y,y+ax)$ has rank at least 2.

\end{example}

\begin{example}\label{ex:2}

We consider $X$, the $U\oplus \langle -10\rangle$-polarized K3 surface from \cite{GarbagnatiSalgado}[Example 6.1], namely the minimal desingularization of the double cover of the plane branched over the plane sextic

\[
B: 2x^4y^2 + 8x^4z^2 + x^3y^3 + x^3yz^2 + x^3z^3 + x^2y^4 + \]
\[5x^2y^2z^2 + 7x^2z^4 + xy^5 + xy^4z + xy^3z^2 + 6xz^5 + 5y^4z^2=0.
\]

We exhibit a genus 2 pencil $\pi_P$ on $X$ that satisfies the following:

\begin{itemize}
\item[i)] $| \mathcal{R}(\pi_P,\mathbb{Q})|=\infty$;
\item[ii)] $\mathcal{R}(\pi_P,\mathbb{Q}(\sqrt{5})) \subset \mathbb{P}^1(\mathbb{Q}(\sqrt{5}))$ is not thin.
\end{itemize}

$X$ admits a unique genus 1 fibration, namely the pull-back of the pencil of lines through $R=(1:0:0)$, which moreover has a section over $\mathbb{Q}(\sqrt{5})$ and, by \cite{GarbagnatiSalgado} it has infinitely many fibers with positive Mordell--Weil rank over that field. These yield a family of bisections for the genus 2 fibration that we describe in what follows.

Let $\mathcal{X}_P$ be the blow up of $X$ at the point above $P=(0:0:1)$ and an infinitely near point. Then $\rho(\mathcal{X}_P)=5$. The pencil of lines through $P$, given by $y=tx$, yields a genus 2 fibration on $\pi_P:\mathcal{X}_P \rightarrow \mathbb{P}^1_k$ with equation
\[ w^2=x^6(2t^2+t^3+t^4+t^5)+ x^5zt^4 + x^4z^2(8+t+5t^2+t^3+5t^4) + x^3z^3  + 7x^2z^4  + 6xz^5,
\]
which admits a section over $\mathbb{Q}$, namely $[(x:z:w),t]=[(0:1:0),t]$.

We identify two reducible fibers: the pull-back of the line through $R$ and $P$ that admits two components, and the pull-back of the (degenerate) tritangent line $x=0$ that admits 3 components, namely the two lines above the tritangent and the first exceptional divisor of the blow up of $P$. %\textcolor{green}{There is another one. The infinitely near blowup gives a reducible fiber.} \textcolor{red}{The infinitely near blow up gives a component of a fiber, not a fiber. Fibers are pull-backs of lines. The infinitely near blow up will happen as a component above one of these lines, namely the tritangent $x=0$. So the fiber there has 3 components instead of 2 as I wrote}. 
By the Shioda--Tate formula  (\ref{ShiodaFormula}), 

\[
5=\rho(X)\geq  r +2+ 1+2,
\]

and hence $r=0$.

%First observe that $\pi$ does not admit further reducible fibers. Indeed, a reducible fiber corresponds to a line that intersects $C$ everywhere with even multiplicity, i.e., tritangent line, or that goes through a singularity of the sextic curve. These were described above, since $x=0$ is the only tritangent line to $C$.

We apply Theorem \ref{theorem: nonthinrkjumpgenus2} to conclude that, for $l=\mathbb{Q}(\sqrt{5})$, the set $\mathcal{R}(\pi_P,l)$ of fibers of rank at least 1 over $l$-points is not thin in $\mathbb{P}^1(l)$.

On the other hand, the pullback of the line $z=0$ to $X$ is the elliptic curve with equation
$$w^2=x(2x^3+x^2y+xy^2+y^3)$$
which has infinitely many $\mathbb{Q}$-points as seen in Example 1. Moreover, it is a saliently ramified bisection for $\pi_P$. A direct application of Proposition \ref{infptrkjump} yields $|\mathcal{R}(\pi_P, \mathbb{Q})|=\infty$.
\end{example}

\begin{example}
Let 
\begin{align*}
F(x,y,z):=\frac{7}{73}(&11x^5y +7x^5z + x^4y^2 +5x^4yz+7x^4z^2 +7x^3y^3 +10x^3y^2z \\
&+5x^3yz^2 +4x^3z^3+6x^2y^4 +5x^2y^3z +10x^2y^2z^2 +5x^2yz^3\\ 
&+5x^2z^4+11xy^5 +5xy^3z^2 +12xz^5+9y^6 +5y^4z^2 +10y^2z^4 +4z^6). 
\end{align*}
We consider the K3 surface $X$ given by
\begin{align*}
w^2=F(x,y,z). 
\end{align*}
It is in the family of degree two K3 surfaces of geometric Picard rank 1 constructed in \cite{JuliaRatPtK3deg2}. The surface $X$ is geometrically isomorphic to the surface in \cite[Corollary~30]{K3Pic1deg2}. It is showed in \cite[Theorem~5.3]{JuliaRatPtK3deg2} that the tangent line 
$$T_Q:11y+7z=0$$ 
to the branching curve $B=V(F) \subset \mathbb{P}^2_k$ at $Q=[1:0:0] \in B(\mathbb{Q})$ pulls back to a curve $C=f^{-1}(T_Q)$ whose normalization $\hat{C}$ is given by
$$ w^2 = \frac{16771780}{1226911} y^4 - \frac{1540220}{175273} xy^3 + \frac{81451}{25039}x^2y^2 - \frac{4078}
{3577}x^3y +x^4.$$
The curve $\hat{C}$ has infinitely many rational points. Let $P=[0:1:0]$ and consider the genus two fibration 
$$\pi_P : \mathcal{X}_P \rightarrow \mathbb{P}^1_{\mathbb{Q}}$$
given by
\begin{align}
    w^2=F(x,y,ty)
\end{align}
over $\mathbb{Q}(t)$. Using MAGMA, we check that $C$ is a saliently ramified multisection of $\pi_P : \mathcal{X}_P \rightarrow \mathbb{P}^1_{\mathbb{Q}}$, so there are infinitely many $a \in \mathbb{Q}$ such that the Jacobian of the curve given by $w^2=F(x,y,ay)$ has rank at least 1 over $\mathbb{Q}$. 

The two exceptional divisors of the blowup $\nu_P: \mathcal{X}_P \rightarrow \mathbb{P}^1_{k_P}$ are defined over the field $\mathbb{Q}(\sqrt{63/73})$ and yield two sections of $\pi_P$. By Theorem \ref{theorem: shioda formula}, the jacobian of $\pi_P$ has generic rank $\geq 1$ over $\mathbb{Q}(\sqrt{63/73})$. Then, for infinitely many $a \in \mathbb{Q}$ the rank of the jacobian of the curve $w^2=F(x,y,ay)$ over $\mathbb{Q}(\sqrt{63/73})$ is at least 2.

\end{example}

\section*{Acknowledgments}
This work was supported by an NWO Open Competition ENW– XL grant (project ”Rational
points: new dimensions”- OCENW.XL21.XL21.011). We thank Marc Hindry pointing out Theorem \ref{djamanenkomanin} as a tool in the proof of Theorem \ref{specialization}. We also thank Ariyan Javanpeykar for a helpful exchange on fibrations and the Hilbert property.
\printbibliography

\vspace{30pt}

\noindent\textsc{Bernoulli Institute, University of Groningen, The Netherlands}
\vspace{3pt}

\noindent\textit{Email address:} \texttt{a.arriola.corpion@rug.nl}\\
\textit{URL:} \texttt{https://www.rug.nl/staff/a.arriola.corpion/}

\vspace{3pt}

\noindent\textit{Email address:} \texttt{c.salgado@rug.nl}\\
\textit{URL:} \texttt{https://www.math.rug.nl/algebra/Main/salgado}

\end{document}